\newtheorem{thm}{Theorem}
\newtheorem{lemma}{Lemma} 
\newtheorem{defn}{Definition}	
\newtheorem{conj}{Conjecture}
\numberwithin{equation}{section}
\begin{document}

\title[A potential basis of $\mathrm{H}_{2,n}(q)$]
  {A spanning set and potential basis of the mixed Hecke algebra on two fixed strands}
\author{Dimitrios Kodokostas}
\address{Department of Mathematics,
National Technical University of Athens,
Zografou campus,
{GR-15780} Athens, Greece.}
\email{dkodokostas@math.ntua.gr}

\author{Sofia Lambropoulou}
\address{Department of Mathematics,
National Technical University of Athens,
Zografou campus,
{GR-15780} Athens, Greece.}
\email{sofia@math.ntua.gr}
\urladdr{http://www.math.ntua.gr/$\sim$sofia}

\subjclass[2010]{57M27, 57M25, 20F36, 20C08}
\date{}

\thanks{This research has been co-financed by the European Union (European Social Fund - ESF) and Greek national funds through the Operational Program "Education and Lifelong Learning" of the National Strategic Reference Framework (NSRF) - Research Funding Program: THALES: Reinforcement of the interdisciplinary and/or inter-institutional research and innovation.}

\keywords{mixed braid group on two fixed strands, mixed Hecke algebra, quadratic relation, Hecke-type algebras.}

\begin{abstract}
The mixed braid groups $B_{2,n}, \ n \in \mathbb{N}$, with two fixed strands and $n$ moving ones, are known to be related to the knot theory of certain families of $3$-manifolds. In this paper we define the  mixed Hecke algebra $\mathrm{H}_{2,n}(q)$ as the quotient of the group algebra ${\mathbb Z}\, [q^{\pm 1}] \, B_{2,n}$ over the quadratic relations of the classical Iwahori-Hecke algebra for the braiding generators. 
We furhter provide a potential basis $\Lambda_n$ for $\mathrm{H}_{2,n}(q)$, which we prove is a spanning set for the $\mathbb{Z}[q^{\pm 1}]$-additive structure  of this algebra. The sets $\Lambda_n,\ n \in \mathbb{Z}$ appear to be good candidates for an inductive basis suitable for the construction of Homflypt-type invariants for knots and links in the above $3$-manifolds.
\end{abstract}

\maketitle

\section*{Introduction} \label{introduction}

Knots and links in certain families of 3-manifolds (knot complements in $S^3$, c.c.o. 3-manifolds, handlebodies) can be represented by mixed links and mixed braids in $S^3$ \cite{LR1,HL,LR2,DL}, and the corresponding braid structures in these manifolds are encoded by the mixed braid groups $B_{m,n}$ or appropriate cosets of theirs \cite{La2}. A {\it mixed link} or a {\it mixed braid}  comprises a fixed part (sublink or subbraid, respectively), which represents the 3-manifold we are in, and a moving part (sublink or subbraid), which represents the link or braid, respectively, in that 3-manifold. Furthermore, knot and link isotopy in each one of these 3-manifolds corresponds to appropriate mixed braid equivalences \cite{LR1,HL,LR2,DL}.
 The mixed braid groups have braiding generators and looping generators, see Figure~\ref{mixedgenrs}.

In this paper, we first  recall briefly from \cite{La2} the definition of $B_{2,n}$ and its presentation and we then define the quotient algebra  $\mathrm{H}_{2,n}(q)$ over the quadratic relations of the classical Iwahori--Hecke algebra for the braiding generators. These algebraic structures are  related to the knot theory of the handlebody of genus two,  the complement of the 2-unlink in $S^3$ and the connected sums of two lens spaces. These last spaces are of interest also in biological applications \cite{BM}.   For the algebra  $\mathrm{H}_{2,n}(q)$ we derive a subset $\Lambda_n$, for which we prove that it provides a spanning set for the additive structure of the algebra and also a potential linear basis, a  subject of sequel  research.

We are interested in the sets $\Lambda_n, \ n\in \mathbb{N}$, since they appear to be  appropriate  inductive bases for the  algebras $\mathrm{H}_{2,n}(q), n \in \mathbb{N}$, in order to construct Homflypt-type invariants for  oriented links in the $3$-manifolds whose braid structure is encoded by the groups $B_{2,n}$. Such invariants have already been constructed for the solid torus \cite{La1} and the lens spaces $L(p,1)$ \cite{DLP} utilizing a  similar inductive basis for the algebra $\mathrm{H}_{1,n}(q)$. The algebra $\mathrm{H}_{1,n}(q)$ is a quotient of the Artin braid group of type $\mathrm{B}$, $B_{1,n}$, and a subalgebra of $\mathrm{H}_{2,n}(q)$. In \cite{KL} we also discuss interesting quotients of the algebra $\mathrm{H}_{2,n}(q)$, which generalize the Iwahori--Hecke algebra of type $\mathrm{B}$ and the cyclotomic Ariki--Koike algebras of type $\mathrm{B}$.

\section{$B_{2,n}$ and $\mathrm{H}_{2,n}(q)$} \label{defns}

The elements of the \textit{mixed braid group} $B_{2,n}$ \textit{on two fixed strands} are the braids with $n+2$ strands with the first two of them being identically straight. The group operation of $B_{2,n}$ is the usual braid concatenation. In terms of generators and relations this group is generated  by the braids $\mathcal{T}, \tau, \sigma_1,\dots, \sigma_n$ shown in Figure \ref{mixedgenrs}, for which the following defining relations hold (cf. \cite{La2}):
\begin{equation*}
\begin{array}{rclclcl} 
  \sigma_k \sigma_j&=&\sigma_j \sigma_k  &  & \mbox{for \quad $|k-j|>1$}\\
  \sigma_k \sigma_{k+1} \sigma_k &=& \sigma_{k+1} \sigma_k \sigma_{k+1} && \mbox{for \quad $ 1 \leq k \leq n-1$}\\
  \mathcal{T}\, \sigma_k &=& \sigma_k \,\mathcal{T} \ &&  \mbox{for \quad $ k \geq 2$}\\
  \tau\, \sigma_k &=& \sigma_k\, \tau  && \mbox{for \quad $ k \geq 2$}\\
    \mathcal{T}\, \sigma_1 \,\mathcal{T}\, \sigma_1 &=& \sigma_1 \,\mathcal{T}\, \sigma_1\, \mathcal{T}  && \\
   \tau\, \sigma_1 \,\tau\, \sigma_1 &=& \sigma_1 \,\tau\, \sigma_1\, \tau  && \\
   \tau (\sigma_1 \mathcal{T} {\sigma_1}) &=&  (\sigma_1 \mathcal{T} {\sigma_1}) \tau  &&
\end{array}
\end{equation*}

We call $\tau,\mathcal{T}$ and their inverses the \textit{looping generators}, and the $\sigma_i$'s and their inverses the \textit{braiding generators}. The braiding generators are the usual crossings between consecutive moving strands. We also call the first two strands which are straight in any element of $B_{2,n}$  \textit{fixed strands} and we denote them $I,II$, while we call the rest of the strands  \textit{moving strands} and we conveniently indicate them as $1,2,\ldots,n$ from left to right.

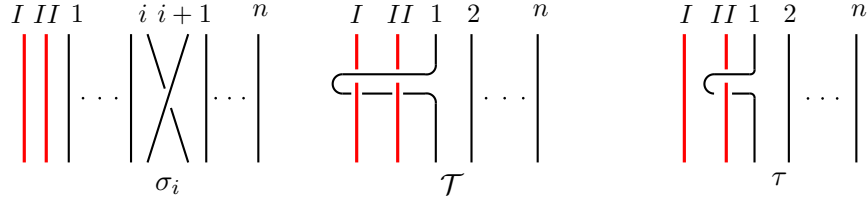
\begin{figure}[!h]
\centering
\psset{xunit=1.0cm,yunit=1.0cm,algebraic=true,dimen=middle,dotstyle=o,dotsize=5pt 0,linewidth=0.8pt,arrowsize=3pt 2,arrowinset=0.25}
\begin{pspicture*}(2.4,1.487965395051425)(13.790397349032817,4.2)
\psaxes[labelFontSize=\scriptstyle,xAxis=true,yAxis=true,Dx=0.5,Dy=0.5,ticksize=-2pt 0,subticks=2]{->}(0,0)(2.477917115593102,1.487965395051425)(13.790397349032817,3.974040998778535)
\psline(16.88064511893745,3.7070798601232746)(16.88064511893745,2.)
\psline[linewidth=1.2pt,linecolor=red](2.872832267578602,3.7070798601232746)(2.872832267578602,2.)
\psline(3.1720411124949095,3.7070798601232746)(3.1720411124949095,2.)
\psline[linewidth=1.2pt,linecolor=red](2.5806877405277304,3.7070798601232746)(2.58068774052773,2.)
\psline(4.,3.7070798601232746)(4.,2.)
\psline(4.7601495970506935,3.7070798601232746)(4.219272069701983,2.)
\psline(4.219272069701984,3.7070798601232746)(4.448902943228507,2.982334941655037)
\psline(4.5305187235241675,2.7247449184682466)(4.7601495970506935,2.)
\psline(5.,3.7070798601232746)(5.,2.)
\psline(5.692300229289959,3.7070798601232746)(5.692300229289959,2.)
\psline(9.406147349086977,3.7070798601232746)(9.406147349086975,2.)
\psline(8.526217751987549,3.7070798601232746)(8.526217751987549,2.)
\psline[linewidth=1.2pt,linecolor=red](7.,3.7070798601232746)(7.,3.237954573315138)
\psline[linewidth=1.2pt,linecolor=red](7.545093461271708,3.7070798601232746)(7.54509346127171,3.237954573315138)
\psline(8.05176137551848,3.7070798601232746)(8.05176137551848,3.306124117223481)
\psline(8.05176137551848,2.783430863866392)(8.05176137551848,2.)
\parametricplot{1.5707963267948966}{4.71238898038469}{1.*0.1283800922678182*cos(t)+0.*0.1283800922678182*sin(t)+6.8090735867741|0.*0.1283800922678182*cos(t)+1.*0.1283800922678182*sin(t)+3.0447774905449365}
\psline(6.8090735867741,3.1731575828127547)(7.908908732089605,3.1731575828127547)
\psline(2.,3.237954573315138)(2.,2.8516004077747352)
\psline[linewidth=1.2pt,linecolor=red](7.,3.0611484297627505)(7.,2.)
\psline[linewidth=1.2pt,linecolor=red](7.54509346127171,3.0611484297627505)(7.545093461271708,2.)
\psline(6.8090735867741,2.9163973982771183)(6.928024863590807,2.9163973982771187)
\psline(7.071975136409195,2.9163973982771187)(7.473118324862518,2.916397398277118)
\psline(7.617068597680906,2.916397398277118)(7.908908732089605,2.9163973982771183)
\parametricplot{-1.7485207621962244}{0.10606967380701743}{1.*0.12197428168953435*cos(t)+0.*0.12197428168953435*sin(t)+7.930472603400021|0.*0.12197428168953435*cos(t)+1.*0.12197428168953435*sin(t)+3.2932105913039447}
\parametricplot{-0.10606967380700993}{1.7485207621962229}{1.*0.12197428168953466*cos(t)+0.*0.12197428168953466*sin(t)+7.930472603400021|0.*0.12197428168953466*cos(t)+1.*0.12197428168953466*sin(t)+2.7963443897859275}
\psline[linewidth=1.2pt,linecolor=red](11.355797037423184,3.7070798601232746)(11.355797037423184,2.)
\psline(12.745179813941085,3.7070798601232746)(12.745179813941085,2.)
\psline(13.64270935391784,3.7070798601232746)(13.642709353917839,2.)
\parametricplot{1.5707963267948966}{4.71238898038469}{1.*0.12838009226781777*cos(t)+0.*0.12838009226781777*sin(t)+11.751031580270006|0.*0.12838009226781777*cos(t)+1.*0.12838009226781777*sin(t)+3.0447774905449316}
\parametricplot{-1.70218102516555}{0.15240941083776374}{1.*0.056680288649645326*cos(t)+0.*0.056680288649645326*sin(t)+12.23271742160917|0.*0.056680288649645326*cos(t)+1.*0.056680288649645326*sin(t)+3.2293493689360995}
\psline(11.751031580270006,3.1731575828127494)(12.22529190521801,3.1731575828127547)
\psline[linewidth=1.2pt,linecolor=red](11.913326513838179,3.7070798601232746)(11.913326513838179,3.237954573315128)
\psline[linewidth=1.2pt,linecolor=red](11.913326513838177,3.0611484297627447)(11.913326513838177,2.)
\psline(12.288740680877106,3.7070798601232746)(12.288740680877106,3.237954573315127)
\psline(11.990257520452031,2.916397398277101)(12.22529190521801,2.9163973982771183)
\parametricplot{-0.15240941083784243}{1.7021810251654024}{1.*0.0566802886496529*cos(t)+0.*0.0566802886496529*sin(t)+12.232717421609163|0.*0.0566802886496529*cos(t)+1.*0.0566802886496529*sin(t)+2.8602056121537633}
\psline(12.288740680877105,2.8516004077747303)(12.288740680877105,2.)
\small
\rput[tl](2.4,4.1){$I$}
\rput[tl](2.7,4.1){$II$}
\rput[tl](3.2,4.1){$1$}
\rput[tl](5.614711044629483,4.1){$n$}
\rput[tl](4.1,4.1){$i$}
\rput[tl](4.35,4.1){$i+1$}

\rput[tl](6.91,4.1){$I$}
\rput[tl](7.4,4.1){$II$}
\rput[tl](11.29,4.1){$I$}
\rput[tl](11.7,4.1){$II$}
\rput[tl](7.975649028452078,4.1){$1$}
\rput[tl](12.2,4.1){$1$}
\rput[tl](9.34,4.1){$n$}
\rput[tl](13.57349138585477,4.1){$n$}
\rput[tl](8.46785871412534,4.1){$2$}
\rput[tl](12.672497384961341,4.1){$2$}
\normalsize
\rput[tl](4.321617802606506,1.8049817472045466){$\sigma _i$}
\rput[tl](8.109129621177031,1.8383518895364541){$\mathcal{T}$}
\rput[tl](12.51398918110046,1.8633794962853847){$\tau$}
\begin{scriptsize}
\psdots[dotsize=3pt 0,dotstyle=*,linecolor=green](2.,2.)
\psdots[dotsize=3pt 0,dotstyle=*,linecolor=green](2.,3.7070798601232746)
\psdots[dotsize=3pt 0,dotstyle=*,linecolor=green](16.88064511893745,3.7070798601232746)
\psdots[dotsize=3pt 0,dotstyle=*,linecolor=darkgray](16.88064511893745,2.)
\psdots[dotsize=3pt 0,dotstyle=*](2.,3.237954573315138)
\psdots[dotsize=3pt 0,dotstyle=*,linecolor=darkgray](16.880645118937455,3.237954573315138)
\psdots[dotsize=1pt 0,dotstyle=*](3.3619484844450165,2.853539930061637)
\psdots[dotsize=1pt 0,dotstyle=*](3.582230490176628,2.853539930061637)
\psdots[dotsize=1pt 0,dotstyle=*](5.1242045302979085,2.853539930061637)
\psdots[dotsize=1pt 0,dotstyle=*](5.309705166703476,2.853539930061637)
\psdots[dotsize=1pt 0,dotstyle=*](3.80251249590824,2.853539930061637)
\psdots[dotsize=1pt 0,dotstyle=*](5.495205803109044,2.853539930061637)
\psdots[dotsize=3pt 0,dotstyle=*](2.,2.8516004077747352)
\psdots[dotsize=3pt 0,dotstyle=*,linecolor=darkgray](16.880645118937455,2.8516004077747352)
\psdots[dotsize=1pt 0,dotstyle=*](8.966182550537262,2.853539930061637)
\psdots[dotsize=1pt 0,dotstyle=*](8.719765223706005,2.853539930061637)
\psdots[dotsize=1pt 0,dotstyle=*](9.212599877368518,2.853539930061637)
\psdots[dotsize=3pt 0,dotstyle=*](2.,3.0611484297627505)
\psdots[dotsize=1pt 0,dotstyle=*](13.193944583929463,2.8516004077747295)
\psdots[dotsize=1pt 0,dotstyle=*](13.,2.851600407774732)
\psdots[dotsize=1pt 0,dotstyle=*](13.387889167858926,2.851600407774727)
\end{scriptsize}
\end{pspicture*}
\caption{The generators of $B_{2,n}$.}
\label{mixedgenrs}
\end{figure}

To what follows, it will be important to define the \textit{looping elements} or just \textit{loopings}  $\mathcal{T}_i,\tau_i$  and  $\mathcal{T}_i^{- 1},\tau_i^{- 1}$  of  $B_{2,n}$ (see Figure \ref{figure_3}), which are mixed braids  with  all strands straight except for the $i$-th moving strand that loops once around $I$ or $II$ respectively and encloses all intermediate strands. Namely, $\mathcal{T}_1 :=\mathcal{T},\ \tau_1:=\tau$ and for $ i>1$: 
\begin{equation} \nonumber
  \mathcal{T}_i := \sigma_{i-1} \ldots \sigma_1\mathcal{T} \sigma_1 \ldots \sigma_{i-1} \quad \mbox{and} \quad   \tau_i := \sigma_{i-1} \ldots \sigma_1\tau \sigma_1 \ldots \sigma_{i-1}.
	\end{equation}
	We say that the loopings   $\mathcal{T}_i^{\pm 1},\tau_i^{\pm 1}$ as well as the braiding generators $\sigma_i^{\pm 1}$ have \textit{index} $i$. Clearly, the braiding generators along with  the looping elements generate $B_{2,n}$.

\begin{figure}[h]
\centering
\psset{xunit=1.0cm,yunit=1.0cm,algebraic=true,dimen=middle,dotstyle=o,dotsize=5pt 0,linewidth=0.8pt,arrowsize=3pt 2,arrowinset=0.25}
\begin{pspicture*}(5.7299729881424675,1.5)(14.140545192001177,4.15)
\psaxes[labelFontSize=\scriptstyle,xAxis=true,yAxis=true,Dx=0.5,Dy=0.5,ticksize=-2pt 0,subticks=2]{->}(0,0)(5.7299729881424675,1.6682978836699438)(14.140545192001177,3.904158293171864)
\psline(16.88064511893745,3.7070798601232746)(16.88064511893745,2.)
\psline(9.010123622070456,3.7070798601232746)(9.010123622070456,2.)
\psline(8.384433343672242,3.7070798601232746)(8.384433343672242,2.)
\psline[linewidth=1.2pt,linecolor=red](6.131948341438671,3.7070798601232746)(6.131948341438671,3.237954573315138)
\psline(7.742057991183408,3.7070798601232746)(7.742057991183407,3.237954573315138)
\psline(8.05176137551848,3.7070798601232746)(8.05176137551848,3.306124117223481)
\psline(8.05176137551848,2.783430863866392)(8.05176137551848,2.)
\parametricplot{1.5707963267948966}{4.71238898038469}{1.*0.1283800922678182*cos(t)+0.*0.1283800922678182*sin(t)+5.940069989396551|0.*0.1283800922678182*cos(t)+1.*0.1283800922678182*sin(t)+3.0447774905449365}
\psline(5.940069989396551,3.1731575828127547)(7.908908732089605,3.1731575828127547)
\psline(2.,3.237954573315138)(2.,2.8516004077747352)
\psline[linewidth=1.2pt,linecolor=red](6.131948341438671,3.0611484297627505)(6.131948341438671,2.)
\psline(7.742057991183407,3.0611484297627505)(7.742057991183408,2.)
\psline(5.940069989396551,2.9163973982771183)(6.05961434464451,2.9163973982771187)
\psline(7.814391987977569,2.9163973982771196)(7.908908732089605,2.9163973982771183)
\parametricplot{-1.7485207621962244}{0.10606967380701743}{1.*0.12197428168953435*cos(t)+0.*0.12197428168953435*sin(t)+7.930472603400021|0.*0.12197428168953435*cos(t)+1.*0.12197428168953435*sin(t)+3.2932105913039447}
\parametricplot{-0.10606967380700993}{1.7485207621962229}{1.*0.12197428168953466*cos(t)+0.*0.12197428168953466*sin(t)+7.930472603400021|0.*0.12197428168953466*cos(t)+1.*0.12197428168953466*sin(t)+2.7963443897859275}
\psline[linewidth=1.2pt,linecolor=red](11.48811247356665,3.7070798601232746)(11.488112473566648,2.)
\psline(13.481200875802337,3.7070798601232746)(13.481200875802335,2.)
\psline(14.041825493191979,3.7070798601232746)(14.041825493191979,2.)
\parametricplot{1.5707963267948966}{4.71238898038469}{1.*0.1283800922678182*cos(t)+0.*0.1283800922678182*sin(t)+11.704763087705427|0.*0.1283800922678182*cos(t)+1.*0.1283800922678182*sin(t)+3.044777490544936}
\parametricplot{-1.7157084569597787}{0.13888197904355062}{1.*0.057446455485935466*cos(t)+0.*0.057446455485935466*sin(t)+13.1852815502023|0.*0.057446455485935466*cos(t)+1.*0.057446455485935466*sin(t)+3.230001918938484}
\psline(11.704763087705427,2.916397398277118)(11.796911288842326,2.916397398277117)
\psline(11.704763087705427,3.1731575828127543)(13.17698596714129,3.17315758281275)
\psline[linewidth=1.2pt,linecolor=red](11.87995636565693,3.7070798601232746)(11.879956365656932,3.237954573315138)
\psline[linewidth=1.2pt,linecolor=red](11.87995636565693,3.0611484297627505)(11.87995636565693,2.)
\psline(13.242174876140087,3.7070798601232746)(13.242174876140085,3.2379545733151263)
\parametricplot{-0.13888197904361554}{1.7157084569597087}{1.*0.05744645548593677*cos(t)+0.*0.05744645548593677*sin(t)+13.185281550202296|0.*0.05744645548593677*cos(t)+1.*0.05744645548593677*sin(t)+2.8595530621513756}
\psline(13.242174876140083,2.8516004077747295)(13.242174876140085,2.)
\small
\rput[tl](6,4.05){$I$}
\rput[tl](6.275,4.05){$II$}
\normalsize
\rput[tl](6.7,4.05){$1$}
\rput[tl](7.05,4.05){$2$}
\rput[tl](8,4.05){$i$}
\rput[tl](8.9,4){$n$}
\small
\rput[tl](11.4,4.05){$I$}
\rput[tl](11.675,4.05){$II$}
\normalsize
\rput[tl](12.055,4.054){$1$}
\rput[tl](12.35,4.05){$2$}
\rput[tl](13.9,4){$n$}
\rput[tl](13.15,4.05){$i$}
\rput[tl](7.410988725817689,1.9){$\mathcal{T}_i$}
\rput[tl](12.640815465251714,1.85){$\tau_i$}
\psline(12.430563810647358,3.7070798601232746)(12.430563810647358,3.707079860123274)
\psline(6.204282338232833,2.9163973982771187)(6.4100009005475105,2.9163973982771183)
\psline(6.554668894135833,2.9163973982771183)(6.718674771223962,2.9163973982771183)
\psline(6.863342764812285,2.9163973982771183)(7.077403864172269,2.9163973982771183)
\psline(7.222071857760592,2.9163973982771183)(7.669723994389246,2.9163973982771196)
\psline[linewidth=1.2pt,linecolor=red](6.482334897341671,3.7070798601232746)(6.48233489734167,3.2379545733151374)
\psline(6.791008768018123,3.7070798601232746)(6.791008768018122,3.237954573315137)
\psline(7.149737860966432,3.7070798601232746)(7.149737860966431,3.2379545733151365)
\psline[linewidth=1.2pt,linecolor=red](6.48233489734167,3.0611484297627505)(6.482334897341671,2.)
\psline(6.791008768018122,3.06114842976275)(6.791008768018122,2.)
\psline(7.149737860966431,3.0611484297627496)(7.1497378609664315,2.)
\psline(11.963001442471535,2.916397398277115)(12.08055754838285,2.9163973982771143)
\psline(12.246647702012059,2.9163973982771125)(12.347518733832752,2.9163973982771143)
\psline(12.513608887461961,2.9163973982771125)(12.885336381530541,2.9163973982771116)
\psline(13.05142653515975,2.91639739827711)(13.17698596714129,2.9163973982771134)
\psline(12.163602625197454,3.7070798601232746)(12.163602625197454,3.237954573315137)
\psline(12.430563810647358,3.7070798601232746)(12.430563810647357,3.2379545733151374)
\psline(12.163602625197452,3.0611484297627496)(12.163602625197452,2.)
\psline(12.430563810647357,3.06114842976275)(12.430563810647358,2.)
\psline(12.968381458345146,3.7070798601232746)(12.968381458345146,3.237954573315136)
\psline(12.968381458345146,3.061148429762749)(12.968381458345144,2.)
\begin{scriptsize}
\psdots[dotsize=3pt 0,dotstyle=*,linecolor=green](2.,2.)
\psdots[dotsize=3pt 0,dotstyle=*,linecolor=green](2.,3.7070798601232746)
\psdots[dotsize=3pt 0,dotstyle=*,linecolor=green](16.88064511893745,3.7070798601232746)
\psdots[dotsize=3pt 0,dotstyle=*,linecolor=darkgray](16.88064511893745,2.)
\psdots[dotsize=3pt 0,dotstyle=*](2.,3.237954573315138)
\psdots[dotsize=3pt 0,dotstyle=*,linecolor=darkgray](16.880645118937455,3.237954573315138)
\psdots[dotsize=3pt 0,dotstyle=*](2.,2.8516004077747352)
\psdots[dotsize=3pt 0,dotstyle=*,linecolor=darkgray](16.880645118937455,2.8516004077747352)
\psdots[dotsize=1pt 0,dotstyle=*](8.69727848287135,2.853539930061637)
\psdots[dotsize=1pt 0,dotstyle=*](8.484468855248481,2.853539930061637)
\psdots[dotsize=1pt 0,dotstyle=*](8.91008811049422,2.853539930061637)
\psdots[dotsize=3pt 0,dotstyle=*](2.,3.0611484297627505)
\psdots[dotsize=1pt 0,dotstyle=*](13.761513184497158,2.8516004077747295)
\psdots[dotsize=1pt 0,dotstyle=*](13.606180886048433,2.8516004077747263)
\psdots[dotsize=1pt 0,dotstyle=*](13.916845482945883,2.8516004077747326)
\psdots[dotsize=1pt 0,dotstyle=*](7.445897926074919,2.566214723628799)
\psdots[dotsize=1pt 0,dotstyle=*](7.314330084516314,2.566214723628799)
\psdots[dotsize=1pt 0,dotstyle=*,](7.577465767633525,2.566214723628799)
\psdots[dotsize=1pt 0,dotstyle=*](12.699472634496253,2.5662147236288004)
\psdots[dotsize=1pt 0,dotstyle=*](12.538227603606185,2.566214723628805)
\psdots[dotsize=1pt 0,dotstyle=*](12.860717665386321,2.566214723628796)
\end{scriptsize}
\end{pspicture*}
\caption{The looping elements $\mathcal{T}_i,\tau_i$.}
\label{figure_3}
\end{figure}
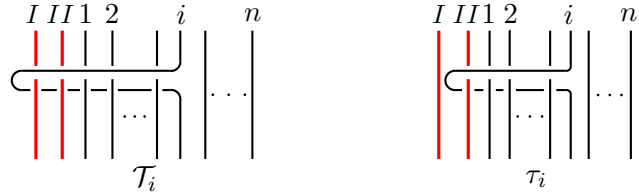

\begin{defn} \rm
The  {\it mixed Hecke algebra on two fixed strands}, denoted $\mathrm{H}_{2,n}(q)$, is the unital, associative algebra defined as the quotient of the group algebra ${\mathbb Z}\, [q^{\pm 1}] \, B_{2,n}$ over the quadratic relations of the classical Iwahori--Hecke algebra for the braiding generators: 
\[ 
\mathrm{H}_{2,n}(q) := \frac{{\mathbb Z}\, [q^{\pm 1}] \, B_{2,n}}{\left<{\sigma_i}^2 - (q-1) \, \sigma_i -
q \cdot 1, \  i=1,2,\ldots,n-1\right>},
\]
\noindent where $q$ is a variable and $1$ denotes the algebra's unit.
\end{defn}

In general we are going to use the same notation for the elements of $B_{2,n}$ when considered as elements of  $\mathrm{H}_{2,n}(q)$, except for $\sigma_i$ which we will be denoting as $g_i$. The algebra $\mathrm{H}_{2,n}(q)$ has an equivalent  presentation with generators $\tau, \mathcal{T},g_1, \ldots , g_{n-1}$ and relations:

\begin{equation*}
\begin{array}{ccrclcll} \label{main_relations}
& & g_k g_{k+1} g_k & = & g_{k+1} g_k g_{k+1} & & \mbox{for} & 1 \leq k \leq n-1 \\
& & g_k g_j & = & g_j g_k & &\mbox{for}&   |k-j|>1\\
& & \mathcal{T}\, g_k & =  &  g_k \,\mathcal{T} & &  \mbox{for} &   k \geq 2 \\
& & \tau\, g_k & = & g_k\, \tau  & & \mbox{for} &   k \geq 2 \\
& &   \mathcal{T}\, g_1 \,\mathcal{T}\, g_1 & = & g_1 \,\mathcal{T}\, g_1\, \mathcal{T}  &&& \\
& &  \tau\, g_1 \,\tau\, g_1 & =& g_1 \,\tau\, g_1\, \tau  &&& \\
& &  \tau (g_1 \mathcal{T} {g_1}) & =& (g_1 \mathcal{T} {g_1}) \tau  &&& \\
& &  g_i^2 & = & (q-1) \, g_i + q \cdot 1  & & \mbox{for} &  $ i=1,2,\ldots,n-1$
\end{array}
\end{equation*}
Borrowing the terminology from the braid group level, we call the elements $\tau,\mathcal{T}$ of the algebra \textit{looping generators} and $g_i$  \textit{braiding generators} of the algebra. Similarly, considering them as elements of  $\mathrm{H}_{2,n}(q)$, we call  $\mathcal{T}_i,\tau_i$ and their inverses \textit{looping elements} or just \textit{loopings}. Also, we say that $i$  is the \textit{index} of the elements $\mathcal{T}_i^{\pm 1}, \tau_i^{\pm 1}$ and $g_i^{\pm 1}$. Since the image of $\sigma_i$ in the algebra is $g_i$, we have:
\begin{equation} \nonumber
  \mathcal{T}_i = g_{i-1} \ldots g_1\mathcal{T} g_1 \ldots g_{i-1} \quad \mbox{and} \quad \tau_i = g_{i-1} \ldots g_1\tau g_1 \ldots g_{i-1}.
	\end{equation}

\section{The spanning set $\Lambda_n$ of $\mathrm{H}_{2,n}(q)$ as a $\mathbb{Z}[q^{\pm 1}]$-module}

Our aim is to put any element $w$ of $\mathrm{H}_{2,n}(q)$ in a form suitable for constructing a Markov trace in the sequence of the algebras $\mathrm{H}_{2,n}(q),\ n \in \mathbb{N}$, and, eventually, for constructing Homflypt-type invariants for the oriented links in the 3-manifolds whose structures are encoded by these algebras. Previous work done with $\mathrm{H}_{1,n}(q)$ (cf. \cite{La1} and references therein), indicates that we should  express each $w$ via  looping elements like the $\tau_i^{\pm}$'s and $\mathcal{T}_i^{\pm}$'s, as a polynomial of the looping elements with each term being a monomial with ordered indices for the loopings, increasing from left to right, and all these followed by some tail products of $g_i$'s. Here we wish to write any $w$ in $\mathrm{H}_{2,n}(q)$ as a $\mathbb{Z}[q^\pm]$-linear sum of elements of the set   
$\Lambda_n$ whose elements are the products $ \Pi_1\Pi_2\cdots\Pi_n G$ with each $\Pi_i$ a finite product of elements of only the loopings $\{\mathcal{T}_i,\tau_i,\mathcal{T}_i^{-1},\tau_i^{-1}\}$ and 
 $ G$ a finite product of braiding generators. In other words, we wish to show that $\Lambda_n$ is a spanning set for the additive structure of $\mathrm{H}_{2,n}(q)$ as a $\mathbb{Z}[q^{\pm 1}]$-module.

We already know that we can achieve the above goal for certain subsets of $\mathrm{H}_{2,n}(q)$. For example whenever $w$ is a product of only the  $g_i$'s then it automatically gets the desired form. By the way, each such $w$  actually belongs to the  Iwahori--Hecke algebra, $\mathrm{H}_n(q)$, of type $\mathrm{A}$ and as such it is subjected to the canonical form given by V.F.R. Jones \cite{J}. Also, whenever $w$ is a product of only  $\tau_i$'s and $g_i$'s,  (thus containing no $\mathcal{T}_i$'s)or the analogous situation of only containing $\mathcal{T}_i$'s and $g_i$'s, it actually belongs to $\mathrm{H}_{1,n}(q)$ (which is the `generalized' Hecke algebra  of type $\mathrm{B}$ \cite{La1}), and therefore it is subjected to the canonical  form  given in \cite{La1}. Such a $w$ is written as a finite $\mathbb{Z}[q^{\pm 1}]$-linear combination of products of $\tau_i$'s and $g_i$'s with the $\tau_i$'s appearing first, and moreover with the indices of the $\tau_i$'s in increasing order from left to right.

To achieve our goal for all elements in $\mathrm{H}_{2,n}(q)$ it is of course enough to  focus on images in the algebra of braids $w$ in $B_{2,n}$. So, we first write $w$ as a product in the generators $\mathcal{T}_i, \tau_i, g_j$ of $B_{2,n}$. Then, in the algebra level, we quite easily push all $g_i$'s (i.e. the images of the $\sigma_i$'s) at the end of this product as described by Lemma \ref{lemma_2} below, using  the quadratic relations of the algebra along with braid isotopies at the braid level. So it is now enough to prove that all words $w$ in $\mathrm{H}_{2,n}(q)$ which are products of loopings can be written in the desired way. This is done in part A of Theorem \ref{theorem_2} below. The proof is a bit technical since we need to pay extra attention to the indices of the loopings involved, a necessary evil as the proof goes by a kind of induction in pairs of such indices.

More disturbing than these technicalities is the fact that certain recursion phenomena occur whenever we try to achieve the left to right increasing order of the looping indices: in the process of pushing loopings of bigger indices to the right of others with smaller indices (as described in Lemma \ref{lemma_3}), some new $g_i$'s might be created, and pushing them anew to the end might increase the indices of the loopings from which it passes, leaving quite open the question of whether the indices of the loopings can indeed be ordered. 

Examples indicate that this kind of reordering the indices always ends the way we wish, except in a certain initial arrangement of the indices, in which case our original $w$ is expressed in terms of itself. Fortunately, we can always solve the equation that arises.  We deal with these recursion phenomena in Lemma \ref{lemma_4}. Figure \ref{figure_last} exhibits schematically a typical example of this recursion phenomenon: the equalities are among elements in the algebra $\mathrm{H}_{2,n}(q)$; the first and last equality can be seen via isotopies in the braid level, while the second one via an application of the quadratic relation in the circled crossing. Thus we obtain $w=q^{-1}\mathcal{T}_1w'+Awg_1$, from which $w(1-Ag_1)=q^{-1}\mathcal{T}_1w'$ where $w'$ is settled by Lemmata \ref{lemma_2}, \ref{lemma_3}. But $1-Ag_1=q^{-1}g_1^2$, thus $w=\mathcal{T}_1w'g_1^{-2}$ is settled in the way described by the Theorem.

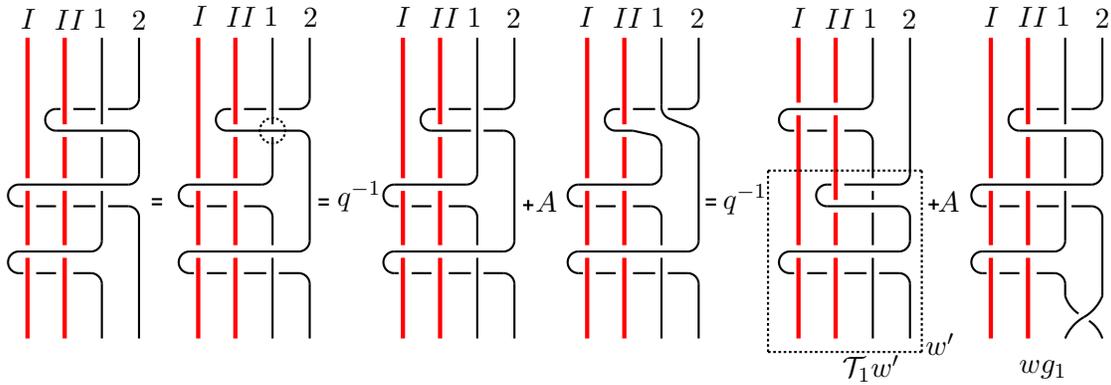
\begin{figure}[h]
\centering
\psset{xunit=1.0cm,yunit=1.0cm,algebraic=true,dimen=middle,dotstyle=o,dotsize=5pt 0,linewidth=0.8pt,arrowsize=3pt 2,arrowinset=0.25}
\begin{pspicture*}(2.4517230957061673,0.3338378389568005)(17.309773694642452,5.5)
\rput[tl](1.4611863891104149,11.589936777545503){$w_1$}
\rput[tl](1.5152156640156378,9.527819451996052){$w_2$}
\psline(3.4644534836815195,4.0528529282665104)(3.758806179386551,4.0528529282665104)
\psline(3.7588061793865517,2.759477017658749)(3.4644534836815195,2.759477017658749)
\psline(3.2338987811489743,2.759477017658749)(2.970433931066568,2.759477017658749)
\psline(3.2338987811489743,1.8691287040186242)(2.970433931066568,1.8691287040186242)
\psline(6.02789673990659,4.052852928266512)(5.733544044201559,4.052852928266512)
\psline(5.502989341669013,2.7594770176587504)(5.239524491586608,2.7594770176587504)
\psline(6.02789673990659,1.8691287040186249)(5.733544044201559,1.8691287040186246)
\psline(5.5029893416690125,1.8691287040186244)(5.239524491586609,1.8691287040186244)
\psline(8.749295306663841,4.05285292826651)(8.454942610958806,4.052852928266509)
\psline(8.749295306663841,3.764696795438639)(8.224387908426262,3.76469679543864)
\psline(8.224387908426262,2.759477017658752)(7.960923058343854,2.7594770176587518)
\psline(8.74929530666384,1.869128704018627)(8.454942610958806,1.8691287040186266)
\psline(8.224387908426262,1.869128704018626)(7.960923058343854,1.8691287040186257)
\psline(13.483237332534623,1.8691287040186297)(13.219772482452214,1.8691287040186297)
\psline(16.56553040961941,4.052852928266508)(16.27117771391438,4.052852928266508)
\psline(16.56553040961941,2.7594770176587486)(16.27117771391438,2.7594770176587486)
\psline(16.040623011381836,1.8691287040186249)(15.777158161299432,1.8691287040186249)
\parametricplot{1.5707963267948966}{4.71238898038469}{1.*0.14407806641393517*cos(t)+0.*0.14407806641393517*sin(t)+2.7398792285340225|0.*0.14407806641393517*cos(t)+1.*0.14407806641393517*sin(t)+2.9035550840726843}
\parametricplot{1.5707963267948966}{4.71238898038469}{1.*0.14407806641393517*cos(t)+0.*0.14407806641393517*sin(t)+2.7398792285340225|0.*0.14407806641393517*cos(t)+1.*0.14407806641393517*sin(t)+2.01320677043256}
\parametricplot{1.5707963267948966}{4.71238898038469}{1.*0.14407806641393517*cos(t)+0.*0.14407806641393517*sin(t)+5.0089697890540625|0.*0.14407806641393517*cos(t)+1.*0.14407806641393517*sin(t)+2.01320677043256}
\parametricplot{1.5707963267948966}{4.71238898038469}{1.*0.14407806641393517*cos(t)+0.*0.14407806641393517*sin(t)+5.0089697890540625|0.*0.14407806641393517*cos(t)+1.*0.14407806641393517*sin(t)+2.9035550840726843}
\parametricplot{1.5707963267948966}{4.71238898038469}{1.*0.14407806641393517*cos(t)+0.*0.14407806641393517*sin(t)+3.233898781148974|0.*0.14407806641393517*cos(t)+1.*0.14407806641393517*sin(t)+3.9087748618525753}
\parametricplot{1.5707963267948966}{4.71238898038469}{1.*0.14407806641393517*cos(t)+0.*0.14407806641393517*sin(t)+5.502989341669013|0.*0.14407806641393517*cos(t)+1.*0.14407806641393517*sin(t)+3.9087748618525753}
\parametricplot{1.5707963267948966}{4.71238898038469}{1.*0.14407806641393517*cos(t)+0.*0.14407806641393517*sin(t)+7.73036835581131|0.*0.14407806641393517*cos(t)+1.*0.14407806641393517*sin(t)+2.01320677043256}
\parametricplot{1.5707963267948966}{4.71238898038469}{1.*0.14407806641393517*cos(t)+0.*0.14407806641393517*sin(t)+7.73036835581131|0.*0.14407806641393517*cos(t)+1.*0.14407806641393517*sin(t)+2.9035550840726843}
\parametricplot{1.5707963267948966}{4.71238898038469}{1.*0.14407806641393517*cos(t)+0.*0.14407806641393517*sin(t)+8.224387908426262|0.*0.14407806641393517*cos(t)+1.*0.14407806641393517*sin(t)+3.9087748618525753}
\parametricplot{1.5707963267949028}{4.712388980384696}{1.*0.14407806641393517*cos(t)+0.*0.14407806641393517*sin(t)+10.179695484848082|0.*0.14407806641393517*cos(t)+1.*0.14407806641393517*sin(t)+2.0132067704325625}
\parametricplot{1.5707963267948966}{4.71238898038469}{1.*0.14407806641393517*cos(t)+0.*0.14407806641393517*sin(t)+10.17969548484808|0.*0.14407806641393517*cos(t)+1.*0.14407806641393517*sin(t)+2.903555084072687}
\parametricplot{1.5707963267948966}{4.71238898038469}{1.*0.14407806641393517*cos(t)+0.*0.14407806641393517*sin(t)+10.673715037463031|0.*0.14407806641393517*cos(t)+1.*0.14407806641393517*sin(t)+3.908774861852578}
\parametricplot{1.5707963267949028}{4.712388980384696}{1.*0.14407806641393517*cos(t)+0.*0.14407806641393517*sin(t)+12.989217779919674|0.*0.14407806641393517*cos(t)+1.*0.14407806641393517*sin(t)+2.013206770432564}
\parametricplot{1.5707963267948966}{4.71238898038469}{1.*0.14407806641393517*cos(t)+0.*0.14407806641393517*sin(t)+12.989217779919672|0.*0.14407806641393517*cos(t)+1.*0.14407806641393517*sin(t)+3.908774861852579}
\parametricplot{1.5707963267948966}{4.71238898038469}{1.*0.14407806641393517*cos(t)+0.*0.14407806641393517*sin(t)+13.483237332534621|0.*0.14407806641393517*cos(t)+1.*0.14407806641393517*sin(t)+2.9035550840726883}
\parametricplot{1.5707963267948966}{4.71238898038469}{1.*0.14407806641393517*cos(t)+0.*0.14407806641393517*sin(t)+16.040623011381836|0.*0.14407806641393517*cos(t)+1.*0.14407806641393517*sin(t)+3.9087748618525753}
\parametricplot{1.5707963267948966}{4.71238898038469}{1.*0.14407806641393517*cos(t)+0.*0.14407806641393517*sin(t)+15.546603458766885|0.*0.14407806641393517*cos(t)+1.*0.14407806641393517*sin(t)+2.9035550840726843}
\parametricplot{1.5707963267948966}{4.71238898038469}{1.*0.14407806641393517*cos(t)+0.*0.14407806641393517*sin(t)+15.546603458766885|0.*0.14407806641393517*cos(t)+1.*0.14407806641393517*sin(t)+2.01320677043256}
\psline(11.198622435700608,4.0528529282665104)(10.904269739995577,4.05285292826651)
\psline(10.410250187380626,2.7594770176587518)(10.673715037463031,2.759477017658751)
\psline(11.198622435700608,1.8691287040186282)(10.904269739995575,1.8691287040186282)
\psline(10.673715037463031,1.8691287040186277)(10.410250187380626,1.8691287040186277)
\psline(13.713792035067163,1.86912870401863)(14.008144730772196,1.8691287040186304)
\psline(3.233898781148974,4.0528529282665104)(3.29153745678211,4.0528529282665104)
\psline(2.7398792285340225,2.759477017658749)(2.797517904167159,2.759477017658749)
\psline(2.7398792285340225,1.8691287040186246)(2.797517904167159,1.8691287040186244)
\psline(5.502989341669013,4.0528529282665104)(5.56062801730215,4.052852928266511)
\psline(5.0089697890540625,2.759477017658749)(5.066608464687199,2.7594770176587495)
\psline(5.0089697890540625,1.8691287040186246)(5.0666084646871985,1.8691287040186246)
\psline(7.73036835581131,2.759477017658749)(7.788007031444446,2.7594770176587504)
\psline(7.73036835581131,1.8691287040186246)(7.788007031444446,1.869128704018625)
\psline(8.224387908426262,4.0528529282665104)(8.282026584059398,4.05285292826651)
\psline(10.673715037463031,4.052852928266513)(10.731353713096167,4.052852928266511)
\psline(10.17969548484808,2.7594770176587518)(10.237334160481216,2.7594770176587518)
\psline(10.179695484848082,1.8691287040186273)(10.237334160481218,1.8691287040186275)
\psline(12.989217779919674,1.8691287040186286)(13.046856455552808,1.869128704018629)
\psline(13.483237332534621,3.0476331504866234)(13.540876008167757,3.0476331504866265)
\psline(16.040623011381836,4.0528529282665104)(16.098261687014972,4.052852928266509)
\psline(15.546603458766885,2.759477017658749)(15.604242134400021,2.7594770176587486)
\psline(15.546603458766885,1.8691287040186246)(15.604242134400023,1.8691287040186246)
\psline(15.77715816129943,2.7594770176587486)(16.040623011381836,2.7594770176587486)
\psline[linewidth=1.6pt,linecolor=red](2.8551565798002954,5.)(2.8551565798002954,3.132022656130266)
\psline[linewidth=1.6pt,linecolor=red](2.8551565798002954,2.9632436448429718)(2.8551565798002954,2.2416743424901417)
\psline[linewidth=1.6pt,linecolor=red](2.8551565798002954,2.0728953312028477)(2.8551565798002954,1.)
\psline[linewidth=1.6pt,linecolor=red](3.3491761324152467,5.)(3.3491761324152463,3.8490863010822878)
\psline[linewidth=1.6pt,linecolor=red](3.3491761324152463,3.680307289794993)(3.349176132415246,3.132022656130266)
\psline[linewidth=1.6pt,linecolor=red](3.349176132415246,2.9632436448429718)(3.3491761324152463,2.2416743424901417)
\psline[linewidth=1.6pt,linecolor=red](3.3491761324152463,2.0728953312028477)(3.3491761324152467,1.)
\psline(3.843195685030198,5.)(3.843195685030198,3.8490863010822873)
\psline(3.843195685030198,3.680307289794993)(3.8431956850301985,3.132022656130266)
\psline[linewidth=1.6pt,linecolor=red](5.124247140320335,5.)(5.124247140320335,3.1320226561302666)
\psline[linewidth=1.6pt,linecolor=red](5.124247140320335,2.963243644842972)(5.124247140320335,2.2416743424901417)
\psline[linewidth=1.6pt,linecolor=red](5.124247140320335,2.0728953312028473)(5.124247140320335,1.)
\psline[linewidth=1.6pt,linecolor=red](5.618266692935286,5.)(5.618266692935286,3.84908630108229)
\psline[linewidth=1.6pt,linecolor=red](5.618266692935286,3.680307289794995)(5.618266692935286,3.1320226561302666)
\psline[linewidth=1.6pt,linecolor=red](5.618266692935286,2.963243644842972)(5.618266692935287,2.2416743424901417)
\psline[linewidth=1.6pt,linecolor=red](5.618266692935287,2.072895331202848)(5.618266692935286,1.)
\psline(6.112286245550237,5.)(6.112286245550237,3.8490863010822904)
\psline(6.112286245550237,2.0728953312028473)(6.112286245550237,1.)
\psline[linewidth=1.6pt,linecolor=red](7.8456457070775825,5.)(7.845645707077582,3.1320226561302684)
\psline[linewidth=1.6pt,linecolor=red](7.845645707077582,2.963243644842974)(7.8456457070775825,2.241674342490142)
\psline[linewidth=1.6pt,linecolor=red](7.8456457070775825,2.0728953312028477)(7.8456457070775825,1.)
\psline[linewidth=1.6pt,linecolor=red](8.339665259692534,5.)(8.339665259692534,3.849086301082285)
\psline[linewidth=1.6pt,linecolor=red](8.339665259692534,3.68030728979499)(8.339665259692534,3.1320226561302698)
\psline[linewidth=1.6pt,linecolor=red](8.339665259692534,2.9632436448429753)(8.339665259692534,2.241674342490143)
\psline[linewidth=1.6pt,linecolor=red](8.339665259692534,2.072895331202848)(8.339665259692534,1.)
\psline(8.833684812307485,2.072895331202849)(8.833684812307487,1.)
\psline[linewidth=1.6pt,linecolor=red](10.294972836114352,5.)(10.294972836114352,3.1320226561302698)
\psline[linewidth=1.6pt,linecolor=red](10.294972836114352,2.9632436448429753)(10.294972836114354,2.241674342490144)
\psline[linewidth=1.6pt,linecolor=red](10.294972836114354,2.072895331202849)(10.294972836114354,1.)
\psline[linewidth=1.6pt,linecolor=red](10.788992388729303,5.)(10.788992388729303,3.8490863010822913)
\psline[linewidth=1.6pt,linecolor=red](10.788992388729303,3.680307289794996)(10.788992388729303,3.1320226561302684)
\psline[linewidth=1.6pt,linecolor=red](10.788992388729303,2.9632436448429735)(10.788992388729303,2.2416743424901444)
\psline[linewidth=1.6pt,linecolor=red](10.788992388729303,2.07289533120285)(10.788992388729303,1.)
\psline(10.673715037463031,3.764696795438643)(10.904269739995577,3.7646967954386437)
\psline(11.283011941344252,2.07289533120285)(11.283011941344254,1.)
\psline(13.483237332534621,3.764696795438642)(13.219772482452214,3.764696795438643)
\psline(12.989217779919672,3.764696795438644)(13.046856455552806,3.7646967954386437)
\parametricplot{4.7779881522157375}{6.2175861353485455}{1.*0.14863080085190597*cos(t)+0.*0.14863080085190597*sin(t)+4.188904119975646|0.*0.14863080085190597*cos(t)+1.*0.14863080085190597*sin(t)+4.201164045936013}
\parametricplot{4.777988152215716}{6.2175861353485}{1.*0.14863080085189953*cos(t)+0.*0.14863080085189953*sin(t)+4.188904119975654|0.*0.14863080085189953*cos(t)+1.*0.14863080085189953*sin(t)+3.1959442681561154}
\parametricplot{4.7779881522156975}{6.217586135348524}{1.*0.1486308008518878*cos(t)+0.*0.1486308008518878*sin(t)+3.694884567360716|0.*0.1486308008518878*cos(t)+1.*0.1486308008518878*sin(t)+2.305595954515979}
\parametricplot{4.777988152215805}{6.217586135348626}{1.*0.14863080085188796*cos(t)+0.*0.14863080085188796*sin(t)+6.4579946804957045|0.*0.14863080085188796*cos(t)+1.*0.14863080085188796*sin(t)+4.201164045935993}
\parametricplot{4.777988152215736}{6.2175861353485296}{1.*0.14863080085188327*cos(t)+0.*0.14863080085188327*sin(t)+6.457994680495705|0.*0.14863080085188327*cos(t)+1.*0.14863080085188327*sin(t)+2.305595954515975}
\parametricplot{4.7779881522157375}{6.217586135348614}{1.*0.14863080085187927*cos(t)+0.*0.14863080085187927*sin(t)+5.96397512788076|0.*0.14863080085187927*cos(t)+1.*0.14863080085187927*sin(t)+3.1959442681560906}
\parametricplot{4.77798815221569}{6.217586135348496}{1.*0.14863080085191707*cos(t)+0.*0.14863080085191707*sin(t)+9.17939324725293|0.*0.14863080085191707*cos(t)+1.*0.14863080085191707*sin(t)+4.20116404593603}
\parametricplot{4.7779881522157845}{6.2175861353485935}{1.*0.148630800851922*cos(t)+0.*0.148630800851922*sin(t)+9.179393247252918|0.*0.148630800851922*cos(t)+1.*0.148630800851922*sin(t)+2.305595954516014}
\parametricplot{4.777988152215903}{6.217586135348628}{1.*0.14863080085192407*cos(t)+0.*0.14863080085192407*sin(t)+8.68537369463796|0.*0.14863080085192407*cos(t)+1.*0.14863080085192407*sin(t)+3.1959442681561385}
\parametricplot{4.777988152215911}{6.217586135348725}{1.*0.14863080085182578*cos(t)+0.*0.14863080085182578*sin(t)+11.628720376289783|0.*0.14863080085182578*cos(t)+1.*0.14863080085182578*sin(t)+4.201164045935941}
\parametricplot{4.777988152215715}{6.217586135348535}{1.*0.14863080085186126*cos(t)+0.*0.14863080085186126*sin(t)+11.628720376289753|0.*0.14863080085186126*cos(t)+1.*0.14863080085186126*sin(t)+2.3055959545159554}
\parametricplot{4.777988152215592}{6.217586135348347}{1.*0.148630800851925*cos(t)+0.*0.148630800851925*sin(t)+11.13470082367473|0.*0.148630800851925*cos(t)+1.*0.148630800851925*sin(t)+3.1959442681561487}
\parametricplot{4.777988152215528}{6.217586135348406}{1.*0.1486308008518256*cos(t)+0.*0.1486308008518256*sin(t)+13.944223118746422|0.*0.1486308008518256*cos(t)+1.*0.1486308008518256*sin(t)+4.2011640459359345}
\parametricplot{4.777988152215656}{6.217586135348442}{1.*0.14863080085196792*cos(t)+0.*0.14863080085196792*sin(t)+14.43824267136124|0.*0.14863080085196792*cos(t)+1.*0.14863080085196792*sin(t)+3.195944268156183}
\parametricplot{4.7779881522157375}{6.217586135348544}{1.*0.14863080085186461*cos(t)+0.*0.14863080085186461*sin(t)+14.438242671361337|0.*0.14863080085186461*cos(t)+1.*0.14863080085186461*sin(t)+2.30559595451596}
\parametricplot{4.777988152215784}{6.2175861353485695}{1.*0.14863080085186814*cos(t)+0.*0.14863080085186814*sin(t)+16.501608797593597|0.*0.14863080085186814*cos(t)+1.*0.14863080085186814*sin(t)+2.305595954515955}
\parametricplot{4.777988152215251}{6.217586135348097}{1.*0.14863080085188163*cos(t)+0.*0.14863080085188163*sin(t)+16.995628350208534|0.*0.14863080085188163*cos(t)+1.*0.14863080085188163*sin(t)+4.201164045936013}
\parametricplot{4.777988152215519}{6.217586135348457}{1.*0.14863080085181882*cos(t)+0.*0.14863080085181882*sin(t)+16.995628350208587|0.*0.14863080085181882*cos(t)+1.*0.14863080085181882*sin(t)+3.195944268156042}
\psline[linewidth=1.6pt,linecolor=red](13.104495131185942,5.)(13.104495131185942,4.137242433910164)
\psline[linewidth=1.6pt,linecolor=red](13.104495131185942,3.9684634226228703)(13.104495131185942,2.2416743424901457)
\psline[linewidth=1.6pt,linecolor=red](13.104495131185942,2.0728953312028513)(13.104495131185942,1.)
\psline[linewidth=1.6pt,linecolor=red](13.598514683800893,5.)(13.598514683800893,4.1372424339101626)
\psline[linewidth=1.6pt,linecolor=red](13.598514683800893,3.9684634226228686)(13.598514683800893,2.843866523302396)
\psline[linewidth=1.6pt,linecolor=red](13.598514683800893,2.6750875120151014)(13.598514683800893,2.241674342490146)
\psline[linewidth=1.6pt,linecolor=red](13.598514683800893,2.072895331202852)(13.598514683800895,1.)
\psline(14.092534236415844,2.9632436448429824)(14.092534236415844,2.8438665233023963)
\psline(14.092534236415844,2.675087512015102)(14.092534236415844,2.241674342490146)
\psline(14.092534236415844,2.0728953312028517)(14.092534236415844,1.)
\psline[linewidth=1.6pt,linecolor=red](15.661880810033157,5.)(15.661880810033157,3.1320226561302693)
\psline[linewidth=1.6pt,linecolor=red](15.661880810033157,2.963243644842975)(15.66188081003316,2.241674342490143)
\psline[linewidth=1.6pt,linecolor=red](15.66188081003316,2.0728953312028486)(15.661880810033159,1.)
\psline[linewidth=1.6pt,linecolor=red](16.155900362648108,5.)(16.155900362648108,3.8490863010822873)
\psline[linewidth=1.6pt,linecolor=red](16.155900362648108,3.680307289794992)(16.155900362648108,3.1320226561302693)
\psline[linewidth=1.6pt,linecolor=red](16.155900362648104,2.963243644842975)(16.155900362648108,2.241674342490143)
\psline[linewidth=1.6pt,linecolor=red](16.155900362648108,2.0728953312028486)(16.15590036264811,1.)
\psline(16.64991991526306,5.)(16.64991991526306,3.8490863010822873)
\psline(16.64991991526306,3.680307289794993)(16.64991991526306,3.1320226561302693)
\psline(16.64991991526306,2.963243644842975)(16.649919915263062,2.295852888391687)
\parametricplot{0.06559917183099628}{1.5051971549638135}{1.*0.1486308008518939*cos(t)+0.*0.1486308008518939*sin(t)+4.18890411997566|0.*0.1486308008518939*cos(t)+1.*0.1486308008518939*sin(t)+3.6163856777691517}
\parametricplot{0.0655991718310494}{1.505197154963811}{1.*0.14863080085190244*cos(t)+0.*0.14863080085190244*sin(t)+4.188904119975651|0.*0.14863080085190244*cos(t)+1.*0.14863080085190244*sin(t)+2.611165899989254}
\parametricplot{0.06559917183101213}{1.5051971549638516}{1.*0.14863080085188515*cos(t)+0.*0.14863080085188515*sin(t)+6.457994680495705|0.*0.14863080085188515*cos(t)+1.*0.14863080085188515*sin(t)+3.616385677769163}
\parametricplot{0.06559917183109797}{1.5051971549639767}{1.*0.14863080085189756*cos(t)+0.*0.14863080085189756*sin(t)+6.4579946804956965|0.*0.14863080085189756*cos(t)+1.*0.14863080085189756*sin(t)+1.7208175863491317}
\parametricplot{0.06559917183110561}{1.5051971549638536}{1.*0.14863080085195818*cos(t)+0.*0.14863080085195818*sin(t)+9.179393247252882|0.*0.14863080085195818*cos(t)+1.*0.14863080085195818*sin(t)+3.6163856777690873}
\parametricplot{0.06559917183111032}{1.505197154963922}{1.*0.14863080085194755*cos(t)+0.*0.14863080085194755*sin(t)+9.179393247252897|0.*0.14863080085194755*cos(t)+1.*0.14863080085194755*sin(t)+1.7208175863490776}
\parametricplot{0.06559917183074701}{1.5051971549636423}{1.*0.1486308008518888*cos(t)+0.*0.1486308008518888*sin(t)+11.13470082367477|0.*0.1486308008518888*cos(t)+1.*0.1486308008518888*sin(t)+2.61116589998927}
\parametricplot{0.06559917183129826}{1.5051971549638632}{1.*0.1486308008518746*cos(t)+0.*0.1486308008518746*sin(t)+11.628720376289738|0.*0.1486308008518746*cos(t)+1.*0.1486308008518746*sin(t)+1.720817586349158}
\parametricplot{0.06559917183135805}{1.5051971549641525}{1.*0.14863080085190186*cos(t)+0.*0.14863080085190186*sin(t)+13.944223118746345|0.*0.14863080085190186*cos(t)+1.*0.14863080085190186*sin(t)+3.6163856777691437}
\parametricplot{0.06559917183067812}{1.5051971549633947}{1.*0.14863080085186856*cos(t)+0.*0.14863080085186856*sin(t)+14.438242671361328|0.*0.14863080085186856*cos(t)+1.*0.14863080085186856*sin(t)+2.611165899989292}
\parametricplot{0.06559917183036805}{1.5051971549629994}{1.*0.1486308008517605*cos(t)+0.*0.1486308008517605*sin(t)+14.438242671361435|0.*0.1486308008517605*cos(t)+1.*0.1486308008517605*sin(t)+1.720817586349281}
\parametricplot{0.06559917183110144}{1.5051971549641803}{1.*0.14863080085180502*cos(t)+0.*0.14863080085180502*sin(t)+16.995628350208605|0.*0.14863080085180502*cos(t)+1.*0.14863080085180502*sin(t)+3.6163856777692422}
\parametricplot{0.06559917183148169}{1.5051971549644043}{1.*0.14863080085187283*cos(t)+0.*0.14863080085187283*sin(t)+16.995628350208545|0.*0.14863080085187283*cos(t)+1.*0.14863080085187283*sin(t)+2.611165899989277}
\parametricplot{0.06559917183132452}{1.5051971549640022}{1.*0.14863080085183214*cos(t)+0.*0.14863080085183214*sin(t)+16.50160879759363|0.*0.14863080085183214*cos(t)+1.*0.14863080085183214*sin(t)+1.7208175863491952}
\psline(4.337215237645149,5.)(4.33721523764515,4.191420979811738)
\psline(4.1986471860999215,4.0528529282665104)(3.927585190673845,4.0528529282665104)
\psline(4.198647186099939,3.7646967954386414)(3.233898781148974,3.76469679543864)
\psline(4.33721523764515,3.6261287438934184)(4.33721523764515,3.186201202031835)
\psline(4.198647186099926,3.0476331504866194)(2.7398792285340225,3.0476331504866194)
\psline(4.198647186099931,2.7594770176587518)(3.927585190673845,2.759477017658749)
\psline(3.704627633484984,2.1572848368464945)(2.7398792285340225,2.157284836846495)
\psline(3.8431956850301985,2.963243644842972)(3.8431956850301994,2.295852888391703)
\parametricplot{0.06559917183101525}{1.5051971549638605}{1.*0.14863080085187808*cos(t)+0.*0.14863080085187808*sin(t)+3.694884567360723|0.*0.14863080085187808*cos(t)+1.*0.14863080085187808*sin(t)+1.7208175863491488}
\psline(3.7046276334849946,1.8691287040186237)(3.4644534836815195,1.8691287040186242)
\psline(3.8431956850301976,1.7305606524734174)(3.843195685030198,1.)
\psline(4.337215237645149,2.620908966113529)(4.337215237645149,1.)
\parametricplot{0.06559917183089153}{1.505197154963778}{1.*0.14863080085189376*cos(t)+0.*0.14863080085189376*sin(t)+5.963975127880748|0.*0.14863080085189376*cos(t)+1.*0.14863080085189376*sin(t)+2.611165899989261}
\parametricplot{0.06559917183093047}{1.5051971549637544}{1.*0.14863080085192087*cos(t)+0.*0.14863080085192087*sin(t)+8.68537369463797|0.*0.14863080085192087*cos(t)+1.*0.14863080085192087*sin(t)+2.6111658999892375}
\psline(6.467737746619989,4.05285292826651)(6.196675751193884,4.052852928266512)
\psline(6.606305798165191,4.191420979811732)(6.606305798165188,5.)
\psline(5.502989341669013,3.76469679543864)(6.467737746619979,3.764696795438645)
\psline(6.606305798165187,3.6261287438934318)(6.606305798165187,2.2958528883916998)
\psline(5.0089697890540625,3.0476331504866194)(5.973718194005033,3.047633150486615)
\psline(6.112286245550237,3.680307289794995)(6.112286245550237,3.186201202031828)
\psline(5.973718194005033,2.7594770176587513)(5.733544044201559,2.7594770176587504)
\psline(6.11228624555024,2.6209089661135123)(6.112286245550237,2.2416743424901417)
\psline(6.467737746619952,1.8691287040186266)(6.196675751193885,1.8691287040186249)
\psline(6.60630579816519,1.7305606524734138)(6.606305798165189,1.)
\psline(6.467737746619979,2.1572848368464954)(5.0089697890540625,2.157284836846495)
\psline(9.1891363133772,4.052852928266517)(8.918074317951133,4.05285292826651)
\psline(9.327704364922443,4.191420979811748)(9.327704364922436,5.)
\psline(9.18913631337716,3.764696795438641)(8.918074317951133,3.764696795438639)
\psline(9.327704364922436,3.6261287438933745)(9.327704364922436,2.295852888391746)
\psline(7.73036835581131,2.157284836846495)(9.189136313377201,2.1572848368464963)
\psline(8.833684812307485,5.)(8.833684812307483,3.1862012020318753)
\psline(8.69511676076226,3.04763315048662)(7.73036835581131,3.0476331504866194)
\psline(8.69511676076226,2.7594770176587557)(8.454942610958806,2.759477017658752)
\psline(8.833684812307489,2.6209089661134963)(8.833684812307485,2.2416743424901435)
\psline(9.189136313377164,1.8691287040186244)(8.918074317951133,1.869128704018627)
\psline(9.32770436492244,1.7305606524733648)(9.327704364922438,1.)
\psline(11.638463442414078,4.05285292826652)(11.3674014469879,4.0528529282665104)
\psline(11.777031493959207,4.191420979811698)(11.777031493959205,5.)
\psline(11.144443889798985,3.047633150486626)(10.17969548484808,3.047633150486622)
\psline(11.144443889799074,2.75947701765875)(10.904269739995573,2.75947701765875)
\psline(11.283011941344258,2.6209089661134994)(11.283011941344254,2.2416743424901444)
\psline(10.17969548484808,2.1572848368464976)(11.638463442414022,2.1572848368464976)
\psline(11.367401446987898,1.8691287040186282)(11.63846344241401,1.869128704018625)
\psline(11.777031493959207,1.7305606524734682)(11.777031493959207,1.)
\psline(12.989217779919672,4.052852928266514)(13.95396618487066,4.0528529282665104)
\psline(13.953966184870575,3.764696795438645)(13.713792035067165,3.764696795438641)
\psline(14.09253423641584,3.6261287438934646)(14.092534236415844,3.132022656130277)
\psline(13.713792035067165,3.0476331504866296)(14.447985737485507,3.047633150486618)
\psline(14.586553789030795,5.)(14.5865537890308,3.1862012020318895)
\psline(13.483237332534621,2.759477017658753)(14.447985737485668,2.7594770176587518)
\psline(14.586553789030797,2.62090896611351)(14.586553789030797,2.295852888391688)
\psline(14.447985737485826,1.8691287040186286)(14.17692374205949,1.8691287040186304)
\psline(14.586553789030798,1.7305606524734458)(14.586553789030797,1.)
\psline(12.989217779919672,2.157284836846499)(14.44798573748561,2.157284836846499)
\psline(14.092534236415844,5.)(14.09253423641584,4.191420979811645)
\psline(17.005371416332736,4.05285292826653)(16.734309420906705,4.052852928266508)
\psline(17.14393946787801,5.)(17.14393946787801,4.1914209798116735)
\psline(16.040623011381836,3.76469679543864)(17.005371416332824,3.7646967954386445)
\psline(17.143939467878006,3.626128743893519)(17.143939467878006,3.1862012020317603)
\psline(15.546603458766885,3.0476331504866194)(17.005371416332824,3.0476331504866248)
\psline(16.734309420906705,2.7594770176587486)(17.005371416332736,2.7594770176587544)
\psline(17.14393946787801,2.6209089661136145)(17.14393946787801,1.5654276345489166)
\psline(16.511351863717877,2.157284836846491)(15.546603458766885,2.157284836846495)
\psline(16.27117771391438,1.8691287040186249)(16.511351863717877,1.869128704018626)
\psline(16.649919915263055,1.7305606524735067)(16.64991991526306,1.5654276345489169)
\parametricplot{-0.13000439957941268}{1.3095935835533041}{1.*0.17468417069985673*cos(t)+0.*0.17468417069985673*sin(t)+11.603821426488356|0.*0.17468417069985673*cos(t)+1.*0.17468417069985673*sin(t)+3.6487745386429355}
\parametricplot{3.0115882540111065}{4.451186237143951}{1.*0.17468417069991538*cos(t)+0.*0.17468417069991538*sin(t)+11.456222008815098|0.*0.17468417069991538*cos(t)+1.*0.17468417069991538*sin(t)+4.0843856794185935}
\psline(11.283011941344173,4.10703147416805)(11.283011941344254,5.)
\psline(11.41111109960231,3.915626782280955)(11.648932335701273,3.817533435780482)
\psline(11.777031493959207,3.626128743893361)(11.777031493959207,2.295852888391683)
\parametricplot{-0.04401778484379015}{1.395580198289112}{1.*0.16399857754465388*cos(t)+0.*0.16399857754465388*sin(t)+11.119172217131366|0.*0.16399857754465388*cos(t)+1.*0.16399857754465388*sin(t)+3.5489557614096294}
\psline(11.283011941344245,3.54173923824972)(11.28301194134425,3.1862012020318438)
\psline(10.904269739995577,3.7646967954386437)(11.147760606771742,3.7104433380662862)
\parametricplot{2.093196612489354}{2.753791322147841}{1.*0.5787740515805248*cos(t)+0.*0.5787740515805248*sin(t)+17.18571578694206|0.*0.5787740515805248*cos(t)+1.*0.5787740515805248*sin(t)+0.7811343181888133}
\parametricplot{5.234789266079132}{5.895383975737673}{1.*0.5787740515804826*cos(t)+0.*0.5787740515804826*sin(t)+16.60814359619904|0.*0.5787740515804826*cos(t)+1.*0.5787740515804826*sin(t)+1.7842933163600678}
\parametricplot{3.4564255796251633}{4.117020289283639}{1.*0.4541658971893768*cos(t)+0.*0.4541658971893768*sin(t)+17.08176270836607|0.*0.4541658971893768*cos(t)+1.*0.4541658971893768*sin(t)+1.7060635624436782}
\parametricplot{0.3148329260367089}{0.9754276356952033}{1.*0.4541658971907928*cos(t)+0.*0.4541658971907928*sin(t)+16.71209667477384|0.*0.4541658971907928*cos(t)+1.*0.4541658971907928*sin(t)+0.859364072104222}
\psline{->}(0.6385233850320104,1.)(0.7068663694415547,1.)
\psline(4.5071508110440295,2.785046961015544)(4.643836779863118,2.785046961015544)
\psline(4.5071508110440295,2.8533899454250884)(4.643836779863118,2.8533899454250884)
\psline(9.443205365579079,2.774160088842831)(9.579891334398166,2.774160088842831)
\psline(9.511548349988622,2.8425030732523746)(9.511548349988622,2.7058171044332875)
\psline(11.871668790985886,2.781534203171932)(12.008354759804973,2.781534203171932)
\psline(11.871668790985886,2.8498771875814755)(12.008354759804973,2.8498771875814755)
\psline(6.723993070993652,2.7815342031719337)(6.860679039812741,2.7815342031719337)
\psline(6.723993070993652,2.849877187581478)(6.860679039812741,2.849877187581478)
\psline(14.833690422845429,2.7829165123992325)(14.970376391664516,2.7829165123992325)
\psline(14.902033407254972,2.851259496808776)(14.902033407254972,2.714573527989689)
\rput[tl](6.972172429443146,3.089330859123315){$q^{-1}$}
\rput[tl](12.1,3.0443064633689603){$q^{-1}$}
\rput[tl](9.610602020648194,2.95){$A$}
\rput[tl](14.959500236265258,2.95){$A$}
\rput[tl](2.766893865986634,5.38557504259541){$I$}
\rput[tl](3.2171378235301575,5.376570163444539){$II$}
\rput[tl](3.7214110559789044,5.376570163444539){$1$}
\rput[tl](4.243694046729392,5.358560405142797){$2$}
\rput[tl](5.045128291156864,5.421594559198894){$I$}
\rput[tl](5.495372248700388,5.412589680048023){$II$}
\rput[tl](5.999645481149135,5.412589680048023){$1$}
\rput[tl](6.521928471899622,5.394579921746281){$2$}
\rput[tl](7.746592036418007,5.4035848008971525){$I$}
\rput[tl](8.196835993961532,5.394579921746281){$II$}
\rput[tl](8.701109226410278,5.394579921746281){$1$}
\rput[tl](9.223392217160765,5.376570163444539){$2$}
\rput[tl](10.186914286303905,5.4035848008971525){$I$}
\rput[tl](10.637158243847429,5.394579921746281){$II$}
\rput[tl](11.141431476296177,5.394579921746281){$1$}
\rput[tl](11.663714467046663,5.376570163444539){$2$}
\rput[tl](13.005441460526365,5.394579921746281){$I$}
\rput[tl](13.455685418069889,5.38557504259541){$II$}
\rput[tl](13.959958650518635,5.38557504259541){$1$}
\rput[tl](14.482241641269123,5.3675652842936685){$2$}
\rput[tl](15.57183201852445,5.412589680048023){$I$}
\rput[tl](16.022075976067974,5.4035848008971525){$II$}
\rput[tl](16.52634920851672,5.4035848008971525){$1$}
\rput[tl](17.048632199267207,5.38557504259541){$2$}
\rput[tl](13.689812275992521,0.7840817965003486){$\mathcal{T}_1w'$}
\psline[linestyle=dashed,dash=1pt 1pt](12.69927556939677,3.2334089255372502)(12.69927556939677,0.8201013131038328)
\psline[linestyle=dashed,dash=1pt 1pt](12.69927556939677,3.2334089255372502)(14.743383136644365,3.2334089255372502)
\psline[linestyle=dashed,dash=1pt 1pt](14.743383136644365,3.2334089255372502)(14.743383136644365,0.8201013131038328)
\psline[linestyle=dashed,dash=1pt 1pt](12.69927556939677,0.8201013131038328)(14.743383136644365,0.8201013131038328)
\rput[tl](14.797412411549589,1.0722379293282194){$w'$}
\rput[tl](16.040085734369715,0.70303788414251){$wg_1$}
\pscircle[linestyle=dashed,dash=1pt 1pt](6.112286245550237,3.7646967954386428){0.16877901128729533}
\end{pspicture*}
\caption{A typical example of the recursion phenomenon discussded in Lemma \ref{lemma_4}.}
\label{figure_last}
\end{figure}

\begin{thm} \label{theorem_2}

$\mathrm{(Part \ A)}$ Let $w\in B_{2,n}$ be a product of $k$ in number loopings $\mathcal{T}^{\pm 1}_i,\tau_i^{\pm 1}, i={1,2,\ldots,n}$. Call $m,M$ the minimum and maximum index of the loopings in this product. Then as an element of $\mathrm{H}_{2,n}(q)$, $w$ can be written as a finite  $\mathbb{Z}[q^{\pm 1}]$-linear combination of the form (suppressing the coefficient in $\mathbb{Z}[q^{\pm 1}]$ of each term)
\[ w=\sum w'G \]
so that:
\begin{enumerate}
\item[i)] each $w'$ is the image of a braid in $B_{2,n}$ which is written as a product of $k$ in number loopings $\mathcal{T}^{\pm 1}_i,\tau_i^{\pm 1}, i={1,2,\ldots,n}$. 
\item[ii)] the index of each looping in $w'$ lies in the interval $[m,M]$.
\item[iii)] the indices of the loopings in $w'$ increase from left to right.
\item[iv)] $G$ is a finite product of $g_i^{\pm 1}$'s, $i\in [m,M-1]$ if $m<M$, or  $G=1$ if $m=M$.
\end{enumerate}

\noindent $\mathrm{(Part \ B)}$
Any element in $\mathrm{H}_{2,n}(q)$ can be written as a finite  $\mathbb{Z}[q^{\pm 1}]$-linear combination of the form (suppressing the coefficient in $\mathbb{Z}[q^{\pm 1}]$ of each term): 
\[ \sum \Pi_1\Pi_2\cdots\Pi_n G\]
\noindent where $ G$ is a finite product of the braiding generators $g_1,\ldots,g_{n-1}$, and     $\Pi_i$ is  a finite product of only the loopings  $\mathcal{T}_i,\tau_i,\mathcal{T}_i^{-1},\tau_i^{-1}$ for all $i$. Thus 
$
\Lambda_n := \left\{ \Pi_1\Pi_2\cdots\Pi_n G \ | \ G=\right.$ finite product of  braiding generators, and $  \Pi_i=$ finite product of only the loopings $ \mathcal{T}_i,\tau_i,\mathcal{T}_i^{-1},\tau_i^{-1}, \forall i \left. \right\}
$
is a spanning set of the algebra $\mathrm{H}_{2,n}(q)$.
\end{thm}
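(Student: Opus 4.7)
The plan is to establish Part~A first and then derive Part~B by combining it with Lemma~\ref{lemma_2}. For Part~A, I would use a double induction keyed to a pair of indices. The outer parameter is the range $M-m$, with base case $m=M$ trivial: then $w$ is already a product of loopings sharing a single index and one may take $G=1$. The inner parameter is a measure of disorder of the looping-index sequence, for instance the number of adjacent pairs $(i,j)$ read from left to right with $i>j$ (the inversions).

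For the inductive step, pick an adjacent inversion in $w$ and apply Lemma~\ref{lemma_3} to commute the offending pair. This produces a finite $\mathbb{Z}[q^{\pm 1}]$-linear sum of terms, each still with $k$ loopings with indices in $[m,M]$ and with the pair now in correct order, possibly multiplied by new braiding generators $g_i^{\pm 1}$ with $i\in[m,M-1]$ born from the commutation. I then use Lemma~\ref{lemma_2} to push these braiding generators to the right of each term, regaining the shape (pure looping word)$\cdot G$. If the number of inversions strictly decreases, the inner inductive hypothesis closes the step. Otherwise we are in the recursion situation of Lemma~\ref{lemma_4}: schematically an equation $w(1-Ag_1)=q^{-1}\mathcal{T}_1w'$ appears, and since $1-Ag_1=q^{-1}g_1^2$ is invertible in $\mathrm{H}_{2,n}(q)$ we solve for $w$ and recover the stated form. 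Along the way one checks that the index bounds $[m,M]$ for loopings and $[m,M-1]$ for braiding generators are preserved by each reduction.

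For Part~B, $\mathbb{Z}[q^{\pm 1}]$-linearity reduces the task to the image of a single braid $w\in B_{2,n}$. Writing $w$ as a word in $\mathcal{T}^{\pm 1}_i,\tau^{\pm 1}_i,g_j^{\pm 1}$, Lemma~\ref{lemma_2} lets us push every braiding generator to the right in each term of the ensuing linear expansion, leaving a pure looping word $L$ followed by a braiding word $G$. Applying Part~A to each $L$ sorts its looping indices non-decreasingly from left to right; consecutive loopings of the same index then automatically group into the factors $\Pi_i$ in the definition of $\Lambda_n$, so each summand belongs to $\Lambda_n$ as required.

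The main obstacle is the termination of the inner induction. Rearranging looping indices via Lemma~\ref{lemma_3} injects braiding generators, and pushing these rightwards via Lemma~\ref{lemma_2} may raise the index of the loopings they cross, threatening to create fresh inversions at higher indices rather than reducing them. Lemma~\ref{lemma_4} is the indispensable input that breaks this apparent loop: it recognises the precise configuration in which $w$ reappears on the right hand side of its own reduction and resolves it by inverting $g_1^2$ in the algebra. Without that recognition the reduction procedure has no a priori reason to halt, so the technical heart of the argument is really the book-keeping that shows every non-decreasing step either lowers the inversion count or falls squarely into the solvable Lemma~\ref{lemma_4} pattern.
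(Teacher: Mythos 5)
Your outline identifies the right ingredients (Lemma~\ref{lemma_3} to commute out-of-order loopings, Lemma~\ref{lemma_2} to push the resulting braiding generators to the right, Lemma~\ref{lemma_4} to break the recursion, and the reduction of Part~B to Part~A via Lemma~\ref{lemma_2}), but the induction scheme you propose has a genuine gap: the inner induction on the number of inversions is not well-founded as described. Two of the reduction moves can \emph{increase} the inversion count. First, case (3) of Lemma~\ref{lemma_3} replaces the pair $\tau_i^\epsilon\mathcal{T}_j^{-\epsilon}$ ($j<i$) by a sum containing the term $\tau_i^\epsilon\mathcal{T}_i^{-\epsilon}G^\epsilon$, i.e.\ the low-index looping is promoted to index $i$, which can create fresh inversions with every looping to its right whose index lies in $(j,i)$. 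Second, the big passage property (Lemma~\ref{lemma_2}(8)) only guarantees that looping indices stay in $[m,M]$ while $G$ passes through; individual indices may shift by $1$, again creating new inversions. You acknowledge this and defer to ``book-keeping'' showing every non-decreasing step falls into the Lemma~\ref{lemma_4} pattern, but that book-keeping \emph{is} the proof, and Lemma~\ref{lemma_4} only treats the very specific three-letter configuration $\tau_M^\epsilon\mathcal{T}_M^{-\epsilon}t_m^\zeta$; nothing in your sketch shows how an arbitrary stalled configuration reduces to it. (Also, the equation $w(1-Ag_1)=q^{-1}\mathcal{T}_1w'$ you quote is only the illustrative example from the introduction; in general the invertible element is $1-AG=q^{-1}G^2$ for the longer word $G=g_m\cdots g_{M-2}g_{M-1}^{-1}g_{M-2}^{-1}\cdots g_m^{-1}$ of Lemma~\ref{lemma_4}.)

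The paper avoids the inversion count entirely by inducting lexicographically on the pair $(k,d)$ where $k$ is the \emph{number of loopings} and $d=M-m$, with $k$ dominant. This makes available the key observation that any word beginning with a minimal-index looping, $w=t_m w''$, is settled by applying the inductive hypothesis to the suffix $w''$, which has $k-1$ loopings and hence a strictly smaller index pair regardless of its range. The case analysis then splits each word according to whether it begins with $t_M$ or not and whether, after applying the inductive hypothesis to suffixes, the leading pair is of the problematic form $\tau_M^{\mp 1}\mathcal{T}_m^{\pm 1}$; every branch terminates either because $d$ drops, or because the term begins with $t_m$, or by one application of Lemma~\ref{lemma_4}. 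Your scheme has no induction on $k$, so this peeling-off move is unavailable to you, and termination of your procedure is left unestablished. To repair your argument you would essentially have to import the paper's ordering on $(k,d)$, at which point the two proofs coincide.
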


Part B of the Theorem follows immediately from Part A, since any element in $\mathrm{H}_{2,n}(q)$ can be written as a finite  $\mathbb{Z}[q^{\pm 1}]$-linear combination of images of braids of $B_{2,n}$, and as we already mentioned above,  each such image is written itself as a finite  $\mathbb{Z}[q^{\pm 1}]$-linear combination of products of $\mathcal{T}_i,\tau_i $'s followed by $g_i$'s because of Lemma \ref{lemma_2} below. We give the proof of Part A at the end, after proving the necessary lemmata that follow.

\begin{lemma}\label{lemma_2} Let $A=q^{-1}-1, B=q-1$, and let $1$ be the identity element of $\mathrm{H}_{2,n}(q)$.  Then the following hold in $\mathrm{H}_{2,n}(q)$: \vspace{1ex}

\begin{tabular}{llll}
(1) & $g_i^{-1}=q^{-1}g_i+A\cdot 1$  & &\vspace{1ex}  \\ 
(2) &$g_i\mathcal{T}_j^{\pm 1}=\mathcal{T}_j^{\pm 1}g_i$ & $g_i\tau_j^{\pm 1}=\tau_j^{\pm 1}g_i$& whenever $j\neq i, i+1$ \vspace{1ex}\\
(3) & $g_i\mathcal{T}_i=q^{-1}\mathcal{T}_{i+1}g_i+A\mathcal{T}_{i+1}$ & $g _i\tau_i=q^{-1}\tau_{i+1}g_i+A\tau_{i+1}$& \vspace{1ex} \\
(4) & $g_i\mathcal{T}_i^{-1}=\mathcal{T}_{i+1}^{-1} - A \mathcal{T}_{i+1}^{-1} + A \mathcal{T}_i^{-1}$ & $g_i\tau_i^{-1} = \tau_{i+1}^{-1} - A \tau_{i+1}^{-1} + A \tau_i^{-1}$ & \vspace{1ex}\\
(5) & $g_i \mathcal{T}_{i+1} = q \mathcal{T}_i g_i + B \mathcal{T}_{i+1}$ & $g_i \tau_{i+1} = q \tau_i g_i + B \tau_{i+1}$ & \vspace{1ex}\\
(6) & $g_i \mathcal{T}_{i+1}^{-1} = q^{-1}\mathcal{T}_i^{-1}g_i + A \mathcal{T}_i^{-1}$ & $g_i \tau_{i+1}^{-1} = q^{-1}\tau_i^{-1}g_i + A \tau_i^{-1}$.& 
\end{tabular} \vspace{1ex}

\hspace{0.2ex} (7) (The passage property) Any product $g_k^\epsilon  t_l^\zeta \ (\epsilon, \zeta\in \{ 1,-1\}$, $t_l$ a looping) can be written as a finite linear combination of the form (suppressing the coefficient in $\mathbb{Z}[q^{\pm 1}]$ of each term on the right-hand side):

\[ g_i^\epsilon  t_l^\zeta = \sum t_{l}^{\zeta} g_{i}^{\epsilon} + \sum t_{i}^{\zeta} + \sum t_{i}^{\zeta}g_i^{\epsilon} + \sum t_{i+1}^{\zeta}.\]

\noindent (where possibly some of the terms are missing). \vspace{1ex}

\hspace{0.2ex} (8) (The big passage property) Let $\Pi$ be a finite product of $k$ in number loopings with indices in the interval $[m,M]$, and let $i\in[m,M-1]$. Then $g_i^{\pm 1} \Pi$ can be written as a finite linear combination of the form (suppressing the coefficient in $\mathbb{Z}[q^{\pm 1}]$ of each term on the right-hand side):
\[ g_i^{\pm 1} \Pi = \sum\Pi_1 g_i^{\pm 1}  + \sum\Pi_2 \]
with each $\Pi_1,\Pi_2$ a product of  $k$ in number loopings with indices in $[m,M]$ (and where possibly some terms are missing).
\end{lemma}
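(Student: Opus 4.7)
The plan is to derive (1)--(6) as direct algebraic identities in $\mathrm{H}_{2,n}(q)$, and then leverage them to obtain (7) by a case analysis on the index $l$, and (8) by induction on the number of loopings in $\Pi$.

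Identity (1) is immediate from the quadratic relation: multiplying $g_i^2 = (q-1)g_i + q\cdot 1$ by $g_i^{-1}$ gives $g_i = (q-1)\cdot 1 + q g_i^{-1}$, whence $g_i^{-1} = q^{-1}g_i + (q^{-1}-1)\cdot 1 = q^{-1}g_i + A\cdot 1$. For identity (2) I would induct on $j$ using the recursion $\mathcal{T}_j = g_{j-1}\mathcal{T}_{j-1}g_{j-1}$, with base case $j=1$ provided directly by the defining relations. The induction step is routine when $|i-(j-1)|\ge 2$, but in the delicate sub-case $i=j-2$ one has to expand $\mathcal{T}_j = g_{j-1}g_{j-2}\mathcal{T}_{j-2}g_{j-2}g_{j-1}$, apply the braid relation $g_{j-2}g_{j-1}g_{j-2}=g_{j-1}g_{j-2}g_{j-1}$ twice, and use the inductively known commutation of $g_{j-1}$ with $\mathcal{T}_{j-2}$. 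Identities (3)--(6) all flow from the fundamental recursion $\mathcal{T}_{i+1} = g_i\mathcal{T}_i g_i$ (equivalently $\mathcal{T}_i = g_i^{-1}\mathcal{T}_{i+1}g_i^{-1}$, and, taking inverses, $\mathcal{T}_{i+1}^{-1} = g_i^{-1}\mathcal{T}_i^{-1}g_i^{-1}$): combining with the quadratic relation and (1) produces each formula. For instance (5) is $g_i\mathcal{T}_{i+1} = g_i^2\mathcal{T}_i g_i = ((q-1)g_i + q)\mathcal{T}_i g_i = B\mathcal{T}_{i+1} + q\mathcal{T}_i g_i$, and (3) comes from $g_i\mathcal{T}_i = \mathcal{T}_{i+1}g_i^{-1}$ after substituting (1); (4) and (6) are analogous by working from the inverse recursion. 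Everything said about $\mathcal{T}_i$ transfers verbatim to $\tau_i$ since $\tau$ satisfies the same braid-type relations with the $g_i$'s.

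For the passage property (7) I would simply split on $l$: if $l\notin\{i,i+1\}$ then (2) yields the single term $t_l^\zeta g_i^\epsilon$; if $l\in\{i,i+1\}$ then the relevant one of (3)--(6) produces a linear combination of the remaining three types. The case $\epsilon=-1$ is reduced to $\epsilon=1$ by using (1) to expand $g_i^{-1} = q^{-1}g_i + A\cdot 1$ before passing. Finally, (8) follows by induction on the looping count $k$ of $\Pi$. The base $k=0$ (so $\Pi=1$) is trivial. For the step, write $\Pi = t_l^\zeta \Pi'$ with $\Pi'$ a product of $k-1$ loopings whose indices still lie in $[m,M]$, and apply (7) to $g_i^{\pm 1}t_l^\zeta$: each resulting term is either of the form $s\Pi'$, already a product of $k$ loopings, or of the form $s g_i^{\pm 1}\Pi'$, on which I would invoke the inductive hypothesis to reduce $g_i^{\pm 1}\Pi'$. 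The only verification required is that every newly created looping $s$ has index in $[m,M]$, which is secured because its index lies in $\{l,i,i+1\}$ and the hypothesis $i\in[m,M-1]$ forces $i,i+1\in[m,M]$.

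The main obstacle throughout is bookkeeping: ensuring that both the looping count and the admissible index interval $[m,M]$ are preserved under every passage operation, and that the recursion inherent in (8) terminates with $k$ fixed. Once (1) is used to eliminate $g_i^{-1}$ in favour of $g_i$ and (2)--(6) are established as the complete table of elementary passages, the induction in (8) is essentially clerical; the only genuinely non-trivial computation is the $i=j-2$ sub-case of (2), where an explicit use of the braid relation is unavoidable.
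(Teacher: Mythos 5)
Your proposal is correct and arrives at the same structure of argument, but by a genuinely different route for the elementary identities: the paper verifies items (2)--(6) diagrammatically, reading each equality off a braid picture after at most two applications of the quadratic relation, whereas you derive them purely algebraically from the presentation via the recursion $\mathcal{T}_{i+1}=g_i\mathcal{T}_ig_i$ (and its $\tau$-analogue) together with the quadratic relation. The algebraic route costs you an extra induction for item (2) --- the commutation $g_i\mathcal{T}_j^{\pm1}=\mathcal{T}_j^{\pm1}g_i$ for $j\neq i,i+1$, which the paper obtains for free as a braid isotopy --- and your handling of the sub-case $i=j-2$ via the braid relation is precisely the one non-routine step there; what it buys is a verification carried out entirely inside $\mathrm{H}_{2,n}(q)$ with no appeal to pictures. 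For (7) and (8) the two arguments coincide in substance: both reduce $\epsilon=-1$ to $\epsilon=1$ via (1), and both rest on the observation that every monomial in (2)--(6) contains a single looping whose index lies in $\{l,i,i+1\}\subseteq[m,M]$; you merely formalize the paper's remark that this ``settles (8) immediately'' as an induction on the looping count $k$, which is the right way to make it precise. One concrete payoff of doing the computation your way: the inverse recursion gives $g_i\mathcal{T}_i^{-1}=q\,\mathcal{T}_{i+1}^{-1}g_i+B\,\mathcal{T}_i^{-1}$, not the formula printed in item (4), whose right-hand side contains no $g_i$ at all and already fails at $q=1$ (where it would assert $g_i\mathcal{T}_i^{-1}=\mathcal{T}_{i+1}^{-1}$, i.e.\ $g_i=1$). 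The printed (4) is evidently a typo; your corrected identity still has the single-looping, index-controlled form on which (7) and (8) depend, so nothing downstream is affected.
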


\begin{proof}  The proof is immediate, as  item (2) can be seen in the braid level via trivial braid isotopies, whereas items (2)--(6) can be seen pictorially after at most two applications of the quadratic relation to the braids of the left-hand side. Item (7) is verified by checking the right-hand side of the equalities in items (2)--(6) whenever $\epsilon=1$, and using this result and item (1) whenever $\epsilon=-1$. The important fact in the passage property is that, during the passage of $g_i$ to the right the index of the looping either does not change at all or, if it does, it decreases by $1$ but then never below the index $i$ of the braiding generator $g_i$, or else it increases by $1$ but then never by $1$ above the index $i$ of the braiding generator. This remark and the fact that each monomial on both sides of the equalities in items (2)--(6) contains a single looping, settle immediately item (8).
\end{proof}   

\begin{lemma}\label{lemma_3}
For $j<i$ and $\epsilon,\zeta\in \{ 1,-1\}$ each one of the words $\mathcal{T}_i^\epsilon \mathcal{T}_j^\zeta, \mathcal{T}_i^\epsilon \tau_j^\zeta, \tau_i^\epsilon \mathcal{T}_j^\zeta,\tau_i^\epsilon \tau_j^\zeta$ can be written as required in Theorem \ref{theorem_2} as a linear combination of the form  (suppressing the coefficient in $\mathbb{Z}[q^{\pm 1}]$ of each term on the right-hand side):
\begin{enumerate}
\item $\mathcal{T}_i^\epsilon \mathcal{T}_j^\zeta=\mathcal{T}_j^\zeta \mathcal{T}_i^\epsilon, \ \tau_i^\epsilon \tau_j^\zeta =\tau_j^\zeta \tau_i^\epsilon, \ \mathcal{T}_i^\epsilon \tau_j^\zeta = \tau_j^\zeta \mathcal{T}_i^\epsilon $ \vspace{1ex} 
\item $\tau_i^{\epsilon} \mathcal{T}_j^{\epsilon} = \mathcal{T}_j^{\epsilon} \tau_i^{\epsilon} + \mathcal{T}_j^{\epsilon} \tau_i^{\epsilon} G^{\epsilon} + \tau_j^{\epsilon} \mathcal{T}_{i}^{\epsilon} G^{\epsilon}$,  where  $ G=g_j g_{j+1} \ldots g_{i-2}g_{i-1}^{-1}g_{i-2}^{-1}\ldots g_{j+1}^{-1}g_j^{-1}  $. \vspace{1ex} 
\item $\tau_i^\epsilon \mathcal{T}_j^{-\epsilon} = \mathcal{T}_j^{-\epsilon} \tau_i^\epsilon + \mathcal{T}_j^{-\epsilon} \tau_j^\epsilon G^\epsilon + \tau_i^\epsilon \mathcal{T}_i^{-^\epsilon}G^\epsilon $, where $ G=g_jg_{j+1}\ldots g_{i-2}g_{i-1}g_{i-2}\ldots g_{j+1} g_j.$
\end{enumerate}
\end{lemma}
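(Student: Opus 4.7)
My plan for part (1) is to reduce the three commutativity claims to the special case $j=1$. Since $\mathcal{T}_j^{\zeta} = g_{j-1}\cdots g_1 \mathcal{T}^{\zeta} g_1 \cdots g_{j-1}$ and similarly for $\tau_j^{\zeta}$, and since every $k\le j-1$ satisfies $k\neq i,i-1$ (because $j<i$), Lemma \ref{lemma_2}(2) guarantees that each such $g_k$ commutes with both $\mathcal{T}_i^{\epsilon}$ and $\tau_i^{\epsilon}$. Pulling these $g_k$'s through on both sides reduces the three claims to $\mathcal{T}^{\zeta}\mathcal{T}_i^{\epsilon} = \mathcal{T}_i^{\epsilon}\mathcal{T}^{\zeta}$, $\tau^{\zeta}\tau_i^{\epsilon} = \tau_i^{\epsilon}\tau^{\zeta}$, and $\mathcal{T}^{\zeta}\tau_i^{\epsilon} = \tau_i^{\epsilon}\mathcal{T}^{\zeta}$ for $i\ge 2$, which I prove by induction on $i$. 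The base case $i=2$ is immediate from the defining mixed relations $\mathcal{T}g_1\mathcal{T}g_1 = g_1\mathcal{T}g_1\mathcal{T}$, $\tau g_1\tau g_1 = g_1\tau g_1\tau$, and $\tau(g_1\mathcal{T}g_1)=(g_1\mathcal{T}g_1)\tau$. The inductive step uses $\mathcal{T}_i = g_{i-1}\mathcal{T}_{i-1}g_{i-1}$ (resp.\ the same for $\tau_i$) together with the fact that, for $i\ge 3$, both $\mathcal{T}$ and $\tau$ commute with $g_{i-1}$.

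\smallskip

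For (2) and (3) no such clean commutation is available, because the two loopings genuinely link through the two fixed strands, so the quadratic relation must enter the picture. I will proceed by induction on $i-j\ge 1$. In the base case $i=j+1$, I write $\tau_{j+1}^{\epsilon}\mathcal{T}_j^{\zeta} = g_j\tau_j^{\epsilon}g_j\mathcal{T}_j^{\zeta}$, apply the appropriate instance of items (3)--(6) of Lemma \ref{lemma_2} to push the inner $g_j$ past $\mathcal{T}_j^{\zeta}$ (producing a short linear combination involving $\mathcal{T}_{j+1}^{\zeta}$ and $\mathcal{T}_j^{\zeta}$), and invoke part (1) above to commute $\tau_j^{\epsilon}$ past $\mathcal{T}_{j+1}^{\zeta}$. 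After one further application of Lemma \ref{lemma_2}(1) to rewrite any surplus $g_j$ as $q^{-1}g_j + A\cdot 1$ when needed, the three surviving terms can be identified with those on the right-hand side of (2) or (3), where $G$ reduces to the one-letter word $g_j^{\pm 1}$. For the inductive step $i>j+1$, I use $\tau_i^{\epsilon} = g_{i-1}\tau_{i-1}^{\epsilon}g_{i-1}$, apply the induction hypothesis to $\tau_{i-1}^{\epsilon}\mathcal{T}_j^{\zeta}$, and then conjugate each of the three resulting terms by $g_{i-1}$. Since $g_{i-1}$ commutes with $\mathcal{T}_j^{\zeta}$ and $\tau_j^{\zeta}$ (as $i-1\ne j, j-1$), this conjugation extends the shorter $G$ on the one hand, and reconstitutes $\tau_i^{\epsilon}$ and $\mathcal{T}_i^{\epsilon}$ from $\tau_{i-1}^{\epsilon}$ and $\mathcal{T}_{i-1}^{\epsilon}$ on the other.

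\smallskip

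The main obstacle I foresee is the bookkeeping inside the inductive step: verifying that conjugation by $g_{i-1}$ extends the word $G$ in precisely the prescribed manner and introduces no unintended loopings of other indices. The pattern $G = g_j\cdots g_{i-2}g_{i-1}^{-1}g_{i-2}^{-1}\cdots g_j^{-1}$ of (2) and its signed variant $g_j\cdots g_{i-2}g_{i-1}g_{i-2}\cdots g_j$ of (3) should materialize naturally as (partial) conjugation by a half-twist on strands $j$ through $i$, but the two regimes $\zeta=\epsilon$ and $\zeta=-\epsilon$ have to be treated separately because Lemma \ref{lemma_2}(3)--(6) distinguish the signs and produce the two distinct $G$-words appearing in (2) and (3). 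Once the base case is written out carefully, I expect the inductive step to be a forcing argument: all the extra terms created by the quadratic relation either cancel in pairs or fold back into the three prescribed monomials, and the recursive nature of the loopings does the rest.
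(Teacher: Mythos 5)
Your route is genuinely different from the paper's: the authors dispose of part (1) as a braid isotopy and of parts (2)--(3) as ``two applications of the quadratic relation seen pictorially,'' whereas you give a fully algebraic derivation from the presentation (reduction to $j=1$ plus induction on $i$ for (1), induction on $i-j$ for (2)--(3)). That is a legitimate and in fact more self-contained plan, but there is one concrete error in it. In your reduction for part (1) you state the third residual claim as $\mathcal{T}^{\zeta}\tau_i^{\epsilon}=\tau_i^{\epsilon}\mathcal{T}^{\zeta}$, i.e.\ that $\tau_i$ commutes with $\mathcal{T}_1$. This is false for $i\ge 2$: it is exactly the configuration ($\tau$ carrying the \emph{larger} index, $\mathcal{T}$ the smaller) that parts (2) and (3) of the lemma exist to handle, and for $i=2$, $j=1$ part (2) expresses $\tau_2^{\epsilon}\mathcal{T}_1^{\epsilon}$ as a nontrivial three-term combination, not as $\mathcal{T}_1^{\epsilon}\tau_2^{\epsilon}$. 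What the third claim of part (1) actually reduces to is $\tau^{\zeta}\mathcal{T}_i^{\epsilon}=\mathcal{T}_i^{\epsilon}\tau^{\zeta}$ ($\mathcal{T}$ carries the large index), whose base case $i=2$ is the defining relation $\tau(g_1\mathcal{T}g_1)=(g_1\mathcal{T}g_1)\tau$ that you cite; so this reads as a slip of notation, but in this lemma the asymmetry between ``$\mathcal{T}$ on top'' and ``$\tau$ on top'' is the whole point, and the induction must be run on the correct statement.

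For parts (2)--(3), be aware that the operation arising in your inductive step is not conjugation but the same-sign sandwich $x\mapsto g_{i-1}^{\epsilon}\,x\,g_{i-1}^{\epsilon}$, since $\tau_i^{\epsilon}\mathcal{T}_j^{\zeta}=g_{i-1}^{\epsilon}\bigl(\tau_{i-1}^{\epsilon}\mathcal{T}_j^{\zeta}\bigr)g_{i-1}^{\epsilon}$ and the two outer letters get absorbed differently: one into rebuilding $\tau_{i-1}^{\epsilon}\mapsto\tau_i^{\epsilon}$ (or $\mathcal{T}_{i-1}\mapsto\mathcal{T}_i$), the other into the tail. The telescoping of the $G$-words then rests on the braid relation in the forms $g_{i-1}^{-1}g_{i-2}^{-1}g_{i-1}=g_{i-2}g_{i-1}^{-1}g_{i-2}^{-1}$ (giving $G=g_{i-1}^{-1}G'g_{i-1}$ for part (2)) and $g_{i-1}g_{i-2}g_{i-1}=g_{i-2}g_{i-1}g_{i-2}$ (giving $G=g_{i-1}G'g_{i-1}$ for part (3)); these two identities are what you must verify, and in the opposite sign regime the resulting $g$-word need not coincide letter-for-letter with the stated $G^{\epsilon}$. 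That discrepancy is harmless for the paper's purposes --- Theorem \ref{theorem_2} and Lemma \ref{lemma_4} only use the \emph{shape} of the right-hand side (which loopings occur, in which order, followed by a product of $g_k^{\pm 1}$ with $k\in[j,i-1]$) --- but if you intend to reproduce the exact monomials of the lemma you will need an extra application of $g_k^{-1}=q^{-1}g_k+A$ to reconcile them. With the index swap in part (1) corrected and these two identities checked, your induction goes through.
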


\begin{proof} The proof of this lemma is also immediate as part (1) can be seen at the braid level as braid isotopies, whereas the other two parts can also be seen pictorially after two applications of the quadratic relation to the braids of the left-hand side (the obvious ones, so that the $i$-looping  can be moved above the $j$-looping). 

The loopings appearing on the right-hand side of the equalities given in Lemma \ref{lemma_3} have the same indices as those on the left-hand side.  Nevertheless,  one of the monomials on the right-hand side in case (3) still starts with an $i$-looping instead of a $j$-looping, and this is the reason that recursion can occur in trying to put all words of $\mathrm{H}_{2,n}(q)$ in the required form. The following lemma deals with the heart of these recursion phenomena.  For both the statement and its proof we use the notational convention that  $[i,j]$ in the bottom of a product of looping or braiding generators indicates that their indices lie in the interval $[i,j]$, whereas $<i,j>$ indicates that these indices are also in increasing order (from left to right). 
\end{proof} 

\begin{lemma}\label{lemma_4} Let us denote elements in $\{ \mathcal{T}_i^{\pm 1}, \tau_i^{\pm 1} \}$ indiscreetly by $t_i$. Then each one of the words $\tau_M^\epsilon \mathcal{T}_M^{-\epsilon} t_m^{\ \zeta}$ with $m<M$ and $\epsilon, \zeta \in \{-1,1 \}$ can be written as required in Theorem \ref{theorem_2} as a finite linear combination of the form (suppressing the coefficient in $\mathbb{Z}[q^{\pm 1}]$ of each term on the right-hand side):
\begin{eqnarray*}
\tau_M^\epsilon \mathcal{T}_M^{-\epsilon} t_m^{\ \zeta} =\sum \left( \underset{<m,M>}{t_m t_{m_1} t_{m_2}} \right)  \underset{[m,M-1]}{G}
\end{eqnarray*}

\noindent where each $G$ is a finite product of $g_i^{\pm 1}$'s (notice the crucial fact that every term of the last sum starts with an $m$-looping). 
\end{lemma}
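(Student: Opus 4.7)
The plan is to analyze $w = \tau_M^\epsilon \mathcal{T}_M^{-\epsilon} t_m^\zeta$ case by case on the form of $t_m^\zeta \in \{\mathcal{T}_m^{\pm 1}, \tau_m^{\pm 1}\}$. By Lemma \ref{lemma_3}(1), $\mathcal{T}_M^{-\epsilon}$ commutes with $t_m^\zeta$ in all four subcases, so $w = \tau_M^\epsilon t_m^\zeta \mathcal{T}_M^{-\epsilon}$ always holds. The real problem is therefore commuting $\tau_M^\epsilon$ past $t_m^\zeta$. If $t_m = \tau_m^{\pm 1}$, Lemma \ref{lemma_3}(1) gives direct commutation and $w = \tau_m^\zeta \tau_M^\epsilon \mathcal{T}_M^{-\epsilon}$ is already in the required form. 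If $t_m = \mathcal{T}_m^\epsilon$, I would apply Lemma \ref{lemma_3}(2); if $t_m = \mathcal{T}_m^{-\epsilon}$, Lemma \ref{lemma_3}(3). Each expansion produces three summands, two starting with an $m$-looping (good) and one starting with an $M$-looping (potentially recursive).

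After right-multiplying the expansion by the trailing $\mathcal{T}_M^{-\epsilon}$, the bad summand takes the form $[\tau_M^\epsilon \mathcal{T}_M^{\pm \epsilon}]\, G^\epsilon\, \mathcal{T}_M^{-\epsilon}$, where $G$ is a product of $g_j^{\pm 1}$'s with indices in $[m, M-1]$. I would then push the trailing $\mathcal{T}_M^{-\epsilon}$ leftward across $G^\epsilon$ using Lemma \ref{lemma_2}(3)--(6), or more abstractly the big passage property Lemma \ref{lemma_2}(8). Index bookkeeping (each passage shifts a looping index by at most $1$, and never outside the range $[m, M]$) shows that each resulting summand either starts with an $m$-looping and is in the desired form, or is a scalar multiple of $w\, G'$ itself, where $G'$ is a product of $g_j^{\pm 1}$'s with indices in $[m, M-1]$. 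Collecting the "recursive" summands on the left gives an equation
\[
w\,(1 - \lambda G') \;=\; Y,
\]
with $Y$ already a $\mathbb{Z}[q^{\pm 1}]$-linear combination of the desired shape.

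Finally, using the quadratic relation $g_j^2 = (q-1)g_j + q$, I would verify that $1 - \lambda G'$ is a unit in $\mathrm{H}_{2,n}(q)$ whose inverse is again a product of $g_j^{\pm 1}$'s with indices in $[m, M-1]$; the sample identity $1 - A g_1 = q^{-1} g_1^2$ discussed after the figure, with inverse $q g_1^{-2}$, typifies the mechanism. This yields $w = Y\,(1 - \lambda G')^{-1}$ in the required form. The main obstacle will be the case analysis during the passage of $\mathcal{T}_M^{-\epsilon}$ across $G^\epsilon$: the four sign combinations $(\epsilon, \zeta)$ and the distinction between $\mathcal{T}_m^\epsilon$ versus $\mathcal{T}_m^{-\epsilon}$ produce different specific coefficients and different $G'$'s, and in each instance one must confirm that exactly one truly recursive term $\lambda w G'$ survives, with every other summand being already in the $t_m t_{m_1} t_{m_2} G$ form. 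This verification, schematized by Figure \ref{figure_last}, is the technical heart of the argument.
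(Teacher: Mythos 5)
Your overall strategy---commute $\mathcal{T}_M^{-\epsilon}$ past $t_m^{\,\zeta}$ via Lemma \ref{lemma_3}(1), reorder $\tau_M^\epsilon t_m^{\,\zeta}$, isolate a recursive summand $\lambda\, w G'$, and invert $1-\lambda G'$ using the quadratic relation---is the paper's strategy, and your treatment of the cases $t_m^{\,\zeta}=\tau_m^{\pm 1}$ is complete. But two points in the remaining cases are wrong or unjustified. First, Lemma \ref{lemma_3}(2) produces \emph{no} bad summand: all three terms of $\tau_M^{\epsilon}\mathcal{T}_m^{\epsilon}=\mathcal{T}_m^{\epsilon}\tau_M^{\epsilon}+\mathcal{T}_m^{\epsilon}\tau_M^{\epsilon}G^{\epsilon}+\tau_m^{\epsilon}\mathcal{T}_M^{\epsilon}G^{\epsilon}$ already begin with an $m$-looping; only Lemma \ref{lemma_3}(3) contains the offending term $\tau_M^\epsilon\mathcal{T}_M^{-\epsilon}G^\epsilon$ (the paper notes this explicitly right after Lemma \ref{lemma_3}). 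So recursion occurs only for $t_m^{\,\zeta}=\mathcal{T}_m^{-\epsilon}$, and your case analysis misplaces where the difficulty lives.

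Second, and more seriously, in the genuinely recursive case your dichotomy for the summands of $\tau_M^\epsilon\mathcal{T}_M^{-\epsilon}G^\epsilon\mathcal{T}_M^{-\epsilon}$ does not follow from ``index bookkeeping.'' Pushing $\mathcal{T}_M^{-\epsilon}$ leftward through $G^\epsilon$ yields $\sum_k c_k\,\tau_M^\epsilon\mathcal{T}_M^{-\epsilon}\mathcal{T}_{j_k}^{-\epsilon}G'_k$ with, a priori, $j_k\in[m,M]$: none of these terms ``starts with an $m$-looping'' (they all start with an $M$-looping), and they are scalar multiples of $wG'_k$ only if every $j_k$ equals $m$ --- a fact that requires the specific shape $G=g_m\cdots g_{M-2}g_{M-1}g_{M-2}\cdots g_m$ together with Lemma \ref{lemma_2}(6), not merely the range bound in the passage property. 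Even granting that, you arrive at $w\bigl(1-\sum_k c_k G'_k\bigr)=Y$ with possibly many monomials inside the bracket, and invertibility of such an element is not delivered by the sample identity $1-Ag_1=q^{-1}g_1^2$: that identity works precisely because the coefficient is exactly $A$ and the accompanying braid word is a conjugate of a single generator, hence satisfies the quadratic relation. The paper sidesteps both issues by not routing through Lemma \ref{lemma_3}(3) at all: it rewrites $w=\tau_M^{-1}\mathcal{T}_m\mathcal{T}_M$ and applies the quadratic relation once, at one designated crossing, obtaining directly $w=(\text{settled terms})+A\,wG$ with a single recursive term and $G=g_m\cdots g_{M-2}g_{M-1}^{-1}g_{M-2}^{-1}\cdots g_m^{-1}$ a conjugate of one generator, so that $1-AG=q^{-1}G^2$ is manifestly invertible. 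To make your version work you would have to carry out the passage computation explicitly and show that the recursive part collapses to a single factor of exactly this form.
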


\begin{proof}  We examine all possible cases.  Using the notational convention introduced just before the statement, and underlining the part of the word on which we apply a previous lemma, the quadratic relation, or equalities coming from isotopies in the braid level, we have:
 
\begin{enumerate}
\item  $\tau_M^\epsilon \underline{\mathcal{T}_M^{-\epsilon} \tau_m^\zeta} \overset{Lem. \ \ref{lemma_3}}{=}   \underline{ \tau_M^\epsilon  \tau_m^\zeta} \mathcal{T}_M^{-\epsilon} \overset{Lem. \ \ref{lemma_3}}{=}  \tau_m^\zeta \tau_M^\epsilon \mathcal{T}_M^{-\epsilon}.
$

\item $
 \tau_M^\epsilon \underline{ \mathcal{T}_M^{-\epsilon}  \mathcal{T}_m^\epsilon } \overset{Lem. \ \ref{lemma_3}}{=} \underline{\tau_M^\epsilon \mathcal{T}_m^\epsilon} \mathcal{T}_M^{-\epsilon}   \overset{Lem. \ \ref{lemma_3}}{=} \sum t_mt_M\underset{[m,M-1]}{\underline{G}} \underline{\mathcal{T}_M}^{-\epsilon} \overset{Lem. \ \ref{lemma_2}}{=} \sum t_m \underline{ t_M} \left( \underset{[m,M]}{\underline{ t_{m'}}} \right)\underset{[m,M-1]}{G_1} \\ 
\overset{Lem. \ \ref{lemma_3}}{=} \sum t_m \left( \underset{<m,M>}{t_{m_1} t_{m_2}}\right) \underset{[m,M-1]}{G_2} \underset{[m,M-1]}{G_1} = \sum  \left( \underset{<m,M>}{t_m  t_{m_1} t_{m_2}} \right)  \underset{[m,M-1]}{G_3}.
$

\item Finally for $\tau_M^{-1}  \mathcal{T}_M  \mathcal{T}_m$ (and similarly for $\tau_M  \mathcal{T}_M^{-1}  \mathcal{T}_m^{-1}$): Observe that by Lemma \ref{lemma_3} it is $\tau_M^{-1}  \mathcal{T}_M  \mathcal{T}_m=\tau_M^{-1} \mathcal{T}_m \mathcal{T}_M$. In this form of the braid, apply the quadratic relation (Q.R.) at the second crossing of $\tau_M^{-1}$ with $ \mathcal{T}_m$, perform at the braid level the obvious isotopies (isot.) and put as before $A=q^{-1}-1$ to obtain:

\small
\begin{eqnarray*}
\tau_M^{-1}  \underline{ \mathcal{T}_M  \mathcal{T}_m} &\overset{Lem. \ \ref{lemma_3}}{=}& \tau_M^{-1} \mathcal{T}_m \mathcal{T}_M \\
&\overset{(Q.R.),(isot.)}{=}& q^{-1}\tau_m \left( \underset{[m,M-1]}{\underline{G_1}}\right) \underline{\tau_{M-1}^{-1}} \left( \underset{[m,M-1]}{G_2}\right)  \mathcal{T}_M + A \tau_M^{-1} \mathcal{T}_M   \mathcal{T}_m G \\
& \overset{Lem. \ \ref{lemma_2}}{=} & q^{-1}\tau_m \left( \underset{[m,M]}{t_k} \right)\left( \underset{[m,M-1]}{\underline{G_3}}\right) \left( \underset{[m,M-1]}{\underline{G_2}}\right)\mathcal{T}_M +  A \tau_M^{-1} \mathcal{T}_M   \mathcal{T}_m G
\end{eqnarray*}\normalsize

\noindent where $G=g_mg_{m+1}\cdots g_{M-2}g_{M-1}^{-1}g_{M-2}^{-1}g_{m}^{-1}$. Thus (suppressing coefficients in the big sums):

\small
\begin{eqnarray*}
\tau_M^{-1}  \mathcal{T}_M  \mathcal{T}_m (1-A G) &=& q^{-1}\tau_m \left( \underset{[m,M]}{t_k} \right) \left( \underset{[m,M-1]}{\underline{G_3 G_2}}\right)  \underline{\mathcal{T}_M}\\
& \overset{Lem. \ \ref{lemma_2}}{=}& \sum \tau_m \left( \underset{[m,M]}{\underline{t_k}} \right) \left( \underset{[m,M]}{\underline{t_l}} \right) \underset{[m,M-1]}{G_4} \\
&\overset{Lem. \ref{lemma_3}}{=}& \sum \tau_m \left( \underset{<m,M>}{\underline{t_{m_1} t_{m_2}}} \right) \underset{[m,M-1]}{G_4} = \sum \left( \underset{<m,M>}{ \tau_m t_{m_1} t_{m_2}} \right) \underset{[m,M-1]}{G_4}   
\end{eqnarray*}
\normalsize

\noindent But $1-AG=q^{-1}G^2$ is reversible. Indeed, if  we apply for the braid $G^{-1}$ the quadratic relation at the crossing of its non straight strands and perform the obvious isotopies we obtain:

\begin{eqnarray*}
G^{-1}  =  q^{-1} G + A \cdot 1 & \Leftrightarrow & -A \cdot  1 = -G^{-1} + q^{-1} G \Rightarrow -A G = -1 + q^{-1} G^2 \\ & \Leftrightarrow &  1-A G = q^{-1}G^2.
\end{eqnarray*}

\noindent This means that $1-AG=q^{-1}G^2$ is a reversible element of $\mathrm{H}_{2,n}(q)$ as claimed, with inverse $(1-AG)^{-1}=qG^{-2}$. So the last equality for $\tau_M^{-1}  \mathcal{T}_M  \mathcal{T}_m (1-A G)$ above now gives:

\begin{eqnarray*}
\tau_M^{-1}  \mathcal{T}_M  \mathcal{T}_m  = \sum \left( \underset{<m,M>}{ \tau_m t_{m_1} t_{m_2}} \right) \underset{[m,M-1]}{G_4} (q G^{-2})   =  \sum \left( \underset{<m,M>}{ \tau_m t_{m_1} t_{m_2}} \right) \underset{[m,M-1]}{G_5},
\end{eqnarray*}

\noindent where we suppressed coefficients in the last sum, and the lemma is established in all cases.
\end{enumerate}
\end{proof}

We are now ready to prove Part A of Theorem \ref{theorem_2}, which is all that remains.

\begin{proof} (Part A of Theorem \ref{theorem_2}) For a  braid word $w$ given in the form of the product of $k(w)\geq 1$ in number  loopings, let $m(w)$ and $M(w)$ be respectively the minimum and maximum index of the loopings in this product, and let $(k(w),d(w))$ be the \textit{index pair} of $w$, where $d(w)=M(w)-m(w)$. Whenever $k(w)=1$, then necessarily $d(w)=0$. Otherwise $k(w)\geq 2$ and $d(w)$ can be any number in the interval $[0,n-1]$. Let $A$ be the set of all possible pairs $(k(w),d(w))$. We totally order the elements of $A$ via the relation $<$ defined as:
\[ (k,d)<(k',d') \overset{def}{\Leftrightarrow} [ (k<k')  \text{\ or\ } (k=k' \text{\ and \ } d<d')]. \]
The smallest element of $A$ in this ordering is $(1,0)$, and the next ones are $(2,0),(2,1),\ldots,(2,n-1)$. The pairs $(3,0),(3,1),\ldots,(3,n-1)$ follow, and then the pairs $(4,0),(4,1),\ldots,(4,n-1)$  and so on. 

\smallbreak
We prove the Theorem by induction on the pair $(k(w),d(w))\in A$ with the above ordering:

For  $w$ so that $(k(w),d(w))=(1,0)$, the result is true since $w$ consists of a unique looping which is thus in the required form without any further arrangement. Also, for $w$ so that $(k(w),d(w))=(2,d)$, it is  either $w=t_1t_2$ for $t_1,t_2\in \{ \mathcal{T}_i^{\pm 1}, \tau_i^{\pm 1}\}$ for some $ i=1,2,\ldots,n $ and the result is immediate (here $G=1$), or else $w=t_1t_2$ for $t_1,t_2\in \{ \mathcal{T}_i^{\pm 1}, \tau_i^{\pm 1} | i=1,2,\ldots,n \} $ where $t_1, t_2$ are not of the same index, and the result is also true by Lemma \ref{lemma_3}.

\smallbreak
Our Induction Hypothesis (I.H.): Let's assume that for $(2,n-1)\leq(k_0,d_0)$, the Theorem is true for all  $w'$ which are products of loopings with index pair $(k(w'),d(w'))\leq(k_0,d_0)$.
\smallbreak

Let us now consider a word $w$ for which its index pair $(k(w),d(w))$ follows immediately after $(k_0,d_0)$ in the ordering of the index pairs in A. Thus the Theorem holds for all products of loopings with index pair smaller than that of $w$; by the way, it is $3\leq k(w)$. We'll show that the Theorem holds for $w$ as well, and we'll have finished. \smallbreak

Notational conventions: we denote $k(w)=k, \ m(w)=m,\ M(w)=M$, we call as \textit{settled} all products of $k$ loopings for which the Theorem holds, and we put information regarding the indices in the bottom of products, with $[i,j]$ indicating indices lying in the interval $[i,j]$, whereas $<i,j>$ indicating that these indices also increase from left to right. Underlined products indicate the parts of a word on which we apply a lemma or the Induction Hypothesis.  All sums are finite, and each term in them always has $k(w)$ in number loopings because of the lemmata and the Induction Hypothesis. We ignore the coefficients in $\mathbb{Z}[q^{\pm 1}]$ of each term in the sums, whereas by $G$ we always denote a finite product of braiding generators. Finally, by $t_i$ we denote any looping of index $i$. Let's return to the proof:

\smallbreak
For $m=M$ the result is immediate since $w$ is a product of loopings all of the same index. 

\smallbreak
For $m<M$, we will expedite the arguments by making the observation (Obs) that the result holds for any $ w=t_m\ldots$, a fact whose truth is almost immediate:

\smallbreak
$t_m \underline{\ldots} \overset{I.H.}{=}\sum t_m \left( \underset{<m,M>}{\cdots} \right) = \sum  \left( \underset{<m,M>}{t_m \cdots} \right). \hspace{1cm} (Obs) $

\smallbreak
Now for an arbitrary $w$ for which $m<M$ we have:

\smallbreak
$w= \left\{ \begin{array}{ll} 
\underset{(M'<M)}{t_{M'}}\left( \underset{[m,M] }{\cdots} \right) & (I) \ \text{or} \vspace{1ex} \\
t_M  \left( \underset{[m,M]}{\cdots}  \right)& (II) 
\end{array} \right.  $

\smallbreak
For type (I) words:

\smallbreak
$\underset{(M'<M)}{t_{M'}}\left( \underset{[m,M] }{\underline{\cdots}} \right) \overset{I.H.}{=}\sum \underset{(M'<M)}{t_{M'}}\left( \underset{(M''<M) \ <m,M> }{\cdots t_{M''}}  \right) \underset{[m,M-1]}{G} + \sum \underset{(M'<M)}{t_{M'}}\left( \underset{<m,M> }{\cdots t_{M}}   \right) \underset{[m,M-1]}{G} . $

\smallbreak
For a term $w'$ in the first sum (ignoring the $G$'s) it is $k(w')=k(w), d(w')<d(w)$, so $w'=$settled.

\smallbreak
For a term in the second sum (ignoring the $G$'s):

\smallbreak
$w'= \underset{(M\leq M-1)}{\underline{t_{M'}}}\left( \underset{<m,M-1> }{\underline{\cdots} } \right)(t_M\cdots t_M)   \overset{I.H.}{=} \sum \left( \underset{<m,M-1>}{\cdots}\underset{[M,M-2]}{G}\right)(t_M\cdots t_M)\overset{Lemma \ 1}{=} $ 

 $
=\sum\left( \underset{<m,M-1>}{\cdots}\right) (t_M\cdots t_M) \underset{[m,M-2]}{G}  $ and each term in this sum is settled.

\smallbreak
For type (II) words:

\smallbreak
$t_M  \left( \underset{[m,M]}{\underline{\cdots}}  \right) \overset{I.H.}{=} \sum t_M\left( \underset{<m,M>}{\cdots} \right) \underset{[m,M-1]}{G} = \sum t_M\left( \underset{(m<m')\ <m,M>}{t_{m'}\cdots} \right) \underset{[m,M-1]}{G} + \sum t_M\left( \underset{<m,M>}{t_m\cdots} \right) \underset{[m,M-1]}{G}$

\smallbreak
For a term $w'$ in the first sum (ignoring the $G$'s) it is $k(w')=k(w), d(w')<d(w)$, so $w'=$settled.

\smallbreak
For a term in the second sum (ignoring the $G$'s):

If the product of the first two letters is not $\tau^{-1}_M \mathcal{T}_m$ or 
$\tau_M \mathcal{T}^{-1}_m$ then

\smallbreak
$\underline{t_M} \left( \underset{<m,M>}{ \underline{t_m }\cdots} \right) \overset{Lemma \ \ref{lemma_3}}{=} \sum t_m t_M \underset{[m,M-1]}{\underline{G}} \left( \underset{<m,M>}{\underline{ \cdots}} \right) \overset{Lemma \ \ref{lemma_2}}{=} \sum t_m t_M \left( \underset{[m,M]}{ \cdots} \right)\underset{[m,M-1]}{G'}  $

\smallbreak
and each term in these sums is settled by (Obs).

\smallbreak
If the product of the first two letters is  $\tau^{-1}_M \mathcal{T}_m$ or 
$\tau_M \mathcal{T}^{-1}_m$ then in the first case (and similarly in the second):

\begin{eqnarray*}
\underline{\tau_M}^{-1} \left( \underset{<m,M>}{ \underline{ \mathcal{T}_m} \cdots} \right) &\overset{Lemma \ \ref{lemma_3}}{=}& \sum t_m t_M \underset{[m,M-1]}{\underline{G}} \left( \underset{<m,M>}{ \underline{\cdots}} \right) +  \sum \tau^{-1}_M \mathcal{T}_M\underset{[m,M-1]}{\underline{G}} \left( \underset{<m,M>}{ \underline{\cdots}} \right)\\
&\overset{Lemma \ \ref{lemma_2}}{=}& \sum t_m t_M \left( \underset{[m,M]}{ \cdots} \right)\underset{[m,M-1]}{G'} + \sum \tau^{-1}_M \mathcal{T}_M \left( \underset{[m,M]}{ \cdots} \right) \underset{[m,M-1]}{G'} 
\end{eqnarray*}

and each term in the first sum is settled by (Obs).

\smallbreak
For a term in the second sum (ignoring the $G$'s):

\begin{eqnarray*}  \sum \tau^{-1}_M \mathcal{T}_M \left( \underset{[m,M]}{ \underline{\cdots}} \right)  &\overset{I.H.}{=}& \sum \tau^{-1}_M \mathcal{T}_M \left( \sum \underset{<m,M>}{ \cdots}  \underset{[m,M-1]}{G} \right)\\
&=& \sum \tau^{-1}_M \mathcal{T}_M \left( \underset{(m<m') \ [m,M]}{ t_{m'}\cdots} \right) \underset{[m,M-1]}{G} + \sum \tau^{-1}_M \mathcal{T}_M \left( \underset{[m,M]}{t_m\cdots} \right) \underset{[m,M-1]}{G}  
\end{eqnarray*}

Any term $w'$ in the first sum (ignoring the $G$'s) has $k(w')=k(w), d(w')<d(w)$, so $w'=$settled.

\smallbreak
For a term in the second sum (ignoring the $G$'s):

\smallbreak
$\underline{\tau^{-1}_M \mathcal{T}_M} \left( \underset{[m,M]}{ \underline{t_m} \cdots} \right) \overset{Lemma \ \ref{lemma_4}}{=} \sum \left( \underset{[m,M]}{ t_m t_{m'} t_{m''}} \right) \underset{[m,M-1]}{\underline{G}} \left( \underset{[m,M] }{\underline{\cdots}} \right) \overset{Lemma \ \ref{lemma_2} }{=} \sum \left( \underset{[m,M]}{ t_m t_{m'} t_{m''}} \right) \left( \underset{[m,M] }{\cdots} \right) \underset{[m,M-1]}{G}  $

\smallbreak
and each term in this sum (ignoring the $G$'s) is settled by (Obs).

\smallbreak
Summing up, $w$ is always written as $\sum \underset{<m,M>}{\cdots} \ \underset{[m,M-1]}{G}$ as wanted, and we are done.
\end{proof}

\begin{conj}
The set $\Lambda_n$ is a $\mathbb{Z}[q^{\pm 1}]$-linear basis for the  algebra $\mathrm{H}_{2,n}(q)$.
\end{conj}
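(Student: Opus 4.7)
The plan is to promote the spanning theorem to a basis theorem by producing an explicit action of $\mathrm{H}_{2,n}(q)$ on the free $\mathbb{Z}[q^{\pm 1}]$-module $V_n$ with formal basis $\{e_\lambda\}_{\lambda\in\Lambda_n}$, in which each generator acts by exactly the sequence of moves used in the proof of Theorem~\ref{theorem_2}. If this prescription defines a genuine algebra homomorphism $\rho\colon \mathrm{H}_{2,n}(q)\to \mathrm{End}_{\mathbb{Z}[q^{\pm 1}]}(V_n)$, then $\rho(\lambda)e_1=e_\lambda$ for every $\lambda\in\Lambda_n$, so $\Lambda_n$ is linearly independent in $\mathrm{H}_{2,n}(q)$ and the conjecture follows.

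First I would specify the operators $\rho(g_i^{\pm 1})$, $\rho(\mathcal{T}^{\pm 1})$, $\rho(\tau^{\pm 1})$ on each $e_\lambda$ with $\lambda=\Pi_1\Pi_2\cdots\Pi_n G$ by left-multiplying $\lambda$ and applying the normal-form procedure: the $g_i$ get pushed through the looping block using the passage properties of Lemma~\ref{lemma_2}, the resulting loopings get reordered by Lemma~\ref{lemma_3}, the occasional recursion of Lemma~\ref{lemma_4} is resolved by the inversion $(1-AG)^{-1}=qG^{-2}$, and the tail $G$ is put into Jones normal form inside $\mathrm{H}_n(q)$. Next I would verify that $\rho$ respects all of the defining relations of $\mathrm{H}_{2,n}(q)$. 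The relations internal to the $g_i$ and the commutations $\mathcal{T}g_k=g_k\mathcal{T}$, $\tau g_k=g_k\tau$ for $k\geq 2$ should follow from the known basis theorems for the Iwahori--Hecke algebra \cite{J} and for $\mathrm{H}_{1,n}(q)$ \cite{La1}, because the restrictions of $\rho$ to these subalgebras coincide by construction with those classical representations on $\Lambda_n\cap\mathrm{H}_n(q)$ and $\Lambda_n\cap\mathrm{H}_{1,n}(q)$ respectively.

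The delicate checks are the two quartic relations and, above all, the mixed relation $\tau(g_1\mathcal{T}g_1)=(g_1\mathcal{T}g_1)\tau$, which is the only place where the two looping generators genuinely entangle. Evaluating each side on an arbitrary $e_\lambda$ forces simultaneous application of Lemmas~\ref{lemma_3} and \ref{lemma_4}, and the requirement that both orderings of the rewrite rules yield identical coefficients in $V_n$ is precisely a Bergman diamond-lemma test on the finite set of overlap ambiguities between these rules. This is the step I expect to be the main obstacle: the reduction of Lemma~\ref{lemma_4} is not manifestly confluent with further multiplication by loopings, because resolving the recursion introduces inverse factors $qG^{-2}$ that themselves must be normalized, and any confluence failure would correspond to a hidden relation in $\mathrm{H}_{2,n}(q)$ invalidating the conjecture.

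Should the direct diamond-lemma verification prove too unwieldy, a fallback approach is to exhibit an independent faithful model: since $B_{2,n}$ is realized by mixed braids in the complement of two unknotted strands, one can try to extend a Lawrence--Krammer-type homological representation of $B_{n+2}$ to $B_{2,n}$ and arrange for it to descend modulo the quadratic relations, then separate the $\Lambda_n$-elements by their homology classes. In either route the natural inductive structure $\Lambda_{n-1}\subset\Lambda_n$ provided by the inclusion $\mathrm{H}_{2,n-1}(q)\hookrightarrow \mathrm{H}_{2,n}(q)$ should be exploited: assuming independence at level $n-1$, one reduces to showing that multiplication by the new top-index loopings $\mathcal{T}_n,\tau_n$ sends distinct $\Lambda_{n-1}$-elements to linearly independent cosets, which is considerably more tractable than the full $n$-level statement.
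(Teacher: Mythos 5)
There is a genuine gap here, and in fact the paper itself offers no proof to compare against: the statement is left as an open conjecture, with the authors remarking that its proof ``seems to require tools from representation theory'' and is the subject of ongoing research. Your proposal is a reasonable \emph{program} for such a proof, but it is not a proof. The entire content of the argument is concentrated in the step you yourself flag as ``the main obstacle'': verifying that the operators $\rho(g_i)$, $\rho(\mathcal{T})$, $\rho(\tau)$ defined by the rewriting procedure satisfy all the defining relations of $\mathrm{H}_{2,n}(q)$, above all the entangling relation $\tau(g_1\mathcal{T}g_1)=(g_1\mathcal{T}g_1)\tau$. You do not carry out this verification, and you explicitly concede it might fail (``any confluence failure would correspond to a hidden relation\ldots invalidating the conjecture''). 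An argument whose decisive step is left as an acknowledged open problem establishes nothing; it merely reformulates the conjecture as a confluence statement about a rewriting system.

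There are also two prior issues that must be settled before $\rho$ is even defined. First, $\Lambda_n$ as written in Theorem~\ref{theorem_2} is not a candidate basis at all: it contains, e.g., $1$, $g_1$ and $g_1g_1$, which satisfy the quadratic relation, and likewise $\mathcal{T}_1\mathcal{T}_1^{-1}$ and $1$. Your module $V_n$ must therefore be free on a \emph{normalized} subset (Jones normal form for $G$, a fixed reduced form for each $\Pi_i$), and pinning that down is part of the work. Second, the reduction procedure of Lemmata~\ref{lemma_2}--\ref{lemma_4} and Theorem~\ref{theorem_2} involves many non-canonical choices (which adjacent pair of loopings to reorder first, when to invoke the recursion of Lemma~\ref{lemma_4}, how to normalize the inverse factors $qG^{-2}$), so even the single-generator operators $\rho(g_i)e_\lambda$ are not well defined until one proves the output is choice-independent --- again a confluence statement, needed \emph{before} the relation check. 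Your fallback routes (a Lawrence--Krammer-type model, or induction on $n$ via $\Lambda_{n-1}\subset\Lambda_n$) are likewise only named, not executed; the inductive step in particular requires exactly the kind of freeness statement for multiplication by $\mathcal{T}_n,\tau_n$ that is the hard content of the conjecture. In short: sensible strategy, correct identification of where the difficulty lives, but no proof.
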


\smallbreak

\noindent \textbf{Conclusion and further research.}
In this paper we have given a spanning set $\Lambda_n$ for the mixed Hecke algebra $\mathrm{H}_{2,n}(q)$ which is defined as the quotient of the group algebra  $\mathbb{Z}[q^{\pm 1}]B_{2,n}$ of the mixed braid group $B_{2,n}$ with two fixed strands, over the quadratic  relations of the usual Iwahori--Hecke algebra. The algebra $\mathrm{H}_{2,n}(q)$ is related to the knot theory of various $3$-manifolds and, as stated in the conjecture above, we believe that the set $\Lambda_n$ actually is a  linear basis for this algebra, a fact that promises to lead to the construction of knot invariants in the above $3$-manifolds via the braid-theoretic method. The proof of the conjecture seems to require tools from representation theory and it is the subject of ongoing research.



\begin{thebibliography}{00}

\bibitem{BM} {D.~Buck, M.~Mauricio}, Connect sum of lens spaces surgeries: application to Hin recombination, {\it Math. Proc. Camb. Phil. Soc.}, doi:10.1017/S0305004111000090 (2011), 505--525.

\bibitem{DL} I. Diamantis, S. Lambropoulou, Braid equivalences in 3-manifolds with rational surgery description, {\it Topology and its Applications} (2015),
http://dx.doi.org/10.1016/j.topol.2015.08.009.


\bibitem{DLP} I. Diamantis, S. Lambropoulou, J. H. Przytycki, Topological steps towards the Homflypt skein module of $L(p,1)$ via braids,  {\it J. Knot Theory Ramifications} {\bf 25} (2016), No. 14, 1650084.

\bibitem{HL} R.~H\"{a}ring-Oldenburg, S.~Lambropoulou, Knot theory in handlebodies,
           {\it J. Knot Theory Ramifications} {\bf 11} (2002), No. 6, 921--943.

\bibitem{J} V.~F.~R.~Jones,
               Hecke algebra representations of braid groups and link
               polynomials, {\it Ann.\ Math.} {\bf 126} (1987), 335--388.

\bibitem{KL} D.~Kodokostas, S.~Lambropoulou, Some Hecke-type algebras derived from the braid group with two fixed strands, to appear in Springer PROMS Series, Volume: {\it Algebraic Modeling of Topological and Computational Structures and Applications}.

\bibitem{LR1} S.~Lambropoulou, C.P.~Rourke, Markov's theorem in 3-manifolds, {\it Topology and its Applications} {\bf 78} (1997), 95--122.

\bibitem{LR2} S. Lambropoulou, C. P. Rourke, Algebraic Markov equivalence for links in $3$-manifolds, {\em Compositio Math.} {\bf 142} (2006), 1039-1062.
					
 \bibitem{La1} S. Lambropoulou, Knot theory related to generalized and cyclotomic
Hecke algebras of type {\it B}, {\it J. Knot Theory and its Ramifications} {\bf 8} (1999), No. 5, 621--658.
							
\bibitem{La2} S. Lambropoulou, Braid structures in  knot complements, handlebodies and 3--manifolds, in the Proceedings of the Conference {\it  Knots in Hellas '98},  Series on Knots and Everything {\bf 24}, pp. 274--289, 2000.

\end{thebibliography}
\end{document}